\newsavebox{\@brx}
\newcommand{\llangle}[1][]{\savebox{\@brx}{\(\m@th{#1\langle}\)}%
  \mathopen{\copy\@brx\kern-0.5\wd\@brx\usebox{\@brx}}}
\newcommand{\rrangle}[1][]{\savebox{\@brx}{\(\m@th{#1\rangle}\)}%
  \mathclose{\copy\@brx\kern-0.5\wd\@brx\usebox{\@brx}}}
\theoremstyle{plain} 
\newtheorem{theorem}{\indent\sc Theorem}[section]
\newtheorem{lemma}[theorem]{\indent\sc Lemma}
\newtheorem{corollary}[theorem]{\indent\sc Corollary}
\newtheorem{proposition}[theorem]{\indent\sc Proposition}
\theoremstyle{definition} 
\newtheorem{remark}[theorem]{\indent\sc Remark}
\newtheorem{question}[theorem]{\indent\sc Question}
\newcommand{\del}{\partial}
\newcommand{\delbar}{\overline{\partial}}
\newcommand{\ddbar}{\del\delbar}
\newcommand{\vp}{\varphi}
\newcommand{\ve}{\varepsilon}
\newcommand{\U}{\mathrm{U}(1)}
\begin{document}

\title[On the complement of a hypersurface]
{On the complement of a hypersurface with flat normal bundle which corresponds to a semipositive line bundle} 

\author[T. Koike]{Takayuki Koike} 

\subjclass[2010]{ 
Primary 32J25; Secondary 14C20.
}
%
\keywords{ 
Hermitian metrics, neighborhoods of subvarieties, Ueda theory, Hartogs type extension theorem. 
}
\address{
Department of Mathematics, Graduate School of Science, Osaka City University \endgraf
3-3-138, Sugimoto, Sumiyoshi-ku Osaka, 558-8585 \endgraf
Japan
}
\email{tkoike@sci.osaka-cu.ac.jp}

\maketitle

\begin{abstract}
We investigate the complex analytic structure of the complement of a non-singular hypersurface with unitary flat normal bundle when the corresponding line bundle admits a Hermitian metric with semipositive curvature. 
\end{abstract}

\section{Introduction}

Let $X$ be a complex manifold. 
We say that a line bundle $L$ on $X$ is {\it $C^r$-semipositive} if there exists a Hermitian metric $h$ on $L$ of class $C^r$ such that the Chern curvature tensor $\sqrt{-1}\Theta_h$ of $h$ is positive semi-definite at any point of $X$ ($r\in\{2, 3, 4, \dots\}\cup\{\infty\}$). 
Let $Y$ be a compact non-singular hypersurface of $X$. 
In this paper, we investigate the complex analytic structure of the complement $X\setminus Y$ and small neighborhoods of $Y$ when $[Y]$ is $C^r$-semipositive, 
where $[Y]$ is the line bundle on $X$ which corresponds to the effective divisor $Y$. 
We are interested in the case where the (holomorphic) normal bundle $N_{Y/X}$ of $Y$ is {\it unitary flat}: 
i.e. the transition functions of $N_{Y/X}$ are locally constant functions valued in $\U\coloneqq \{t\in \mathbb{C}\mid |t|=1\}$ for a suitable choice of the system of local trivializations. 
Note that a holomorphic line bundle on a compact K\"ahler manifold is unitary flat if it is topologically trivial (a theorem of Kashiwara, see \cite[\S 1]{U} for example). 

One main conclusion of the main results in this paper is the following: 

\begin{theorem}\label{thm:main}
Let $X$ be a compact connected K\"ahler manifold and $Y$ be a connected non-singular hypersurface of $X$ with topologically trivial normal bundle. 
Assume that $[Y]$ is $C^\infty$-semipositive. \\
$(i)$ When the line bundle $N_{Y/X}^m\coloneqq N_{Y/X}^{\otimes m}$ is holomorphically trivial for some positive integer $m$, there exists a holomorphic fibration $f\colon X\to B$ to a manifold $B$ such that $Y$ is a fiber of $f$. \\
$(ii)$ When the line bundle $N_{Y/X}^m$ is not holomorphically trivial for any positive integer $m$, 
the Hartogs type extension theorem holds on $X\setminus Y$: 
i.e. for any neighborhood $V$ of $Y$ and any holomorphic function $f\colon V\setminus Y\to \mathbb{C}$, there exists a holomorphic function $F\colon X\setminus Y\to \mathbb{C}$ such that $F|_{V\setminus Y}=f$. 
\end{theorem}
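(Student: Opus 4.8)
The plan is to reduce both statements to a single structural fact about a small neighborhood of $Y$, and then to separate the two cases according to the order of the holonomy of the flat bundle $N_{Y/X}$. Fix a $C^\infty$ Hermitian metric $h$ on $[Y]$ with $\sqrt{-1}\Theta_h\ge 0$, and let $s_Y\in H^0(X,[Y])$ be the canonical section with divisor $Y$. Away from $Y$ the function $\Phi\coloneqq -\log|s_Y|_h^2$ satisfies $\sqrt{-1}\del\delbar\Phi=\sqrt{-1}\Theta_h\ge 0$, so $\Phi$ is plurisubharmonic on $X\setminus Y$; since $\Phi\to+\infty$ along $Y$ while $X$ is compact, every sublevel set $\{\Phi\le c\}$ is a compact subset of $X\setminus Y$. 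Thus $\Phi$ is a plurisubharmonic exhaustion and $X\setminus Y$ is weakly $1$-complete, which is the ambient analytic input for $(ii)$.

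The decisive step is to upgrade semipositivity to \emph{flatness near $Y$}: I would show that the pair $(Y,X)$ is of infinite type in the sense of Ueda, so that there is a neighborhood $V_0$ of $Y$ and a biholomorphism of $V_0$ onto a neighborhood of the zero section of the total space of the unitary flat bundle $N_{Y/X}$ under which $s_Y$ becomes the fiber coordinate $w$ and $h$ becomes the flat fiber metric; then $\Phi=-\log|w|^2$ is pluriharmonic on $V_0\setminus Y$. The mechanism is that a nonzero Ueda obstruction of finite order would contribute a sign-definite term to the expansion of the curvature of any metric along $Y$, contradicting $\sqrt{-1}\Theta_h\ge 0$; making this incompatibility precise under the $C^\infty$ hypothesis is the main obstacle of the whole argument. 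Writing $N_{Y/X}$ through its holonomy representation $\rho\colon\pi_1(Y)\to\U$, the flat coordinate transforms by $w\mapsto\rho(\gamma)w$ along $\gamma\in\pi_1(Y)$, and holomorphic triviality of $N_{Y/X}^m$ is equivalent to $\rho^m\equiv 1$.

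For $(i)$, suppose $N_{Y/X}^m\cong\mathcal O_Y$ with $m$ the order of the image of $\rho$, so $\rho$ takes values in the group $\mu_m$ of $m$-th roots of unity. Then $w^m$ is invariant under the holonomy and descends to a holomorphic function $g\coloneqq w^m\colon V_0\to\mathbb C$ with $g^{-1}(0)=Y$; since $\del g=m w^{m-1}\,\del w\ne 0$ off $Y$ and $g$ is proper, $g$ is a proper holomorphic map that is a submersion over $\Delta\setminus\{0\}$, with each fiber $g^{-1}(t)$ ($t\ne 0$) a smooth compact leaf of the flat foliation finitely covering $Y$, and with $Y$ as its (multiple) central fiber. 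To globalize I would propagate these compact leaves over $X$: because all nearby leaves are compact, the leaf space is Hausdorff, and the compactness of $X$ together with the semipositive structure lets one extend $g$ to a holomorphic fibration $f\colon X\to B$ onto a compact curve $B$ with $Y=f^{-1}(f(Y))$. This passage from the local first integral to a global fibration requires a stability and properness check, but it is not the main obstacle once the local fibration is in hand.

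For $(ii)$, suppose $N_{Y/X}^m$ is nontrivial for every $m\ge 1$, i.e.\ $\rho$ has infinite image, so $N_{Y/X}^k$ is a nontrivial unitary flat line bundle for every $k\ne 0$. Such a bundle has no nonzero holomorphic section: in its flat metric the logarithm of the norm of a putative section would be plurisubharmonic on the compact $Y$, hence constant, forcing the section to be flat and the bundle trivial. Now given $f$ holomorphic on $V\setminus Y$ for a neighborhood $V$ of $Y$, restrict to the flat model $V_0\setminus Y\cong\{0<|w|<\varepsilon\}$ and expand fiberwise as a Laurent series $f=\sum_{k\in\mathbb Z}c_k(y)\,w^k$; single-valuedness forces each $c_k$ to be a holomorphic section over $Y$ of the flat bundle with holonomy $\rho^{-k}$. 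By the vanishing just noted, $c_k\equiv 0$ for all $k\ne 0$, while $c_0\in H^0(Y,\mathcal O_Y)=\mathbb C$ because $Y$ is compact and connected. Hence $f$ is constant on $V_0\setminus Y$, and by analytic continuation on the connected set $V\setminus Y$ it is constant there, so the same constant gives the required extension $F$ on $X\setminus Y$. The crux in this case is again the reduction to flat coordinates: one must justify the global fiberwise expansion over the compact $Y$ (the uniform polydisc radius being automatic by compactness) starting only from the semipositive, a priori non-flat, metric.
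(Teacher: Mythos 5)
Your reduction has a genuine gap at what you yourself call the decisive step: passing from $C^\infty$-semipositivity to the flat model near $Y$. The curvature-expansion mechanism you sketch (a non-vanishing Ueda obstruction of finite order contradicts $\sqrt{-1}\Theta_h\geq 0$) only yields that the pair $(Y,X)$ is of \emph{infinite Ueda type}; this is exactly Theorem \ref{thm:submain_1}. But in case $(ii)$, where $N_{Y/X}^m$ is non-trivial for every $m$, infinite type does \emph{not} imply that $[Y]$ is unitary flat on a neighborhood of $Y$, let alone that a neighborhood of $Y$ is biholomorphic to a neighborhood of the zero section of $N_{Y/X}$ with $h$ becoming the flat fiber metric: Ueda's linearization theorem in the non-torsion case requires a Diophantine-type hypothesis on $N_{Y/X}$, and the paper explicitly recalls examples of pairs of infinite type for which $[Y]|_V$ is not unitary flat for any neighborhood $V$ (see \S\ref{section:ueda_theory}, citing \cite[\S 5]{U} and \cite{KO}). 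In fact the unconditional implication ``$C^\infty$-semipositive $\Rightarrow$ $[Y]|_V$ unitary flat'' on which your proof of $(ii)$ rests is precisely \cite[Conjecture 1.1]{K2020}, which this paper only partially solves. The paper's actual proof of $(ii)$ circumvents this by a contradiction: it \emph{assumes} the Hartogs extension property fails, and only then (assumption $(a)$ of Theorem \ref{thm:submain_2}) deduces unitary flatness, through a substantial argument combining Ohsawa's Hartogs-type theorem for K\"ahler domains, the Monge--Amp\`ere/Levi foliation analysis, and P\'erez-Marco's theory of non-linearizable fixed points; flatness then forces every holomorphic function on a punctured neighborhood to be constant (your Laurent-coefficient argument, or the dense-leaf argument via \cite{KU}), so the Hartogs property holds trivially, a contradiction. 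Your final step in $(ii)$ is fine \emph{given} the flat model, but nothing you propose actually produces the flat model.

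For $(i)$ the situation is better: with $N_{Y/X}^m$ trivial, infinite type plus Ueda's linearization theorem (Theorem \ref{thm:ueda_linearization}) does give the local first integral $g=w_j^m$ near $Y$, which is the paper's Corollary \ref{cor:main}. But your globalization step (``all nearby leaves are compact, so the leaf space is Hausdorff, and compactness of $X$ lets one extend $g$'') is an assertion, not an argument; passing from the germ of a fibration along $Y$ to a holomorphic fibration $f\colon X\to B$ on the whole compact K\"ahler manifold is the content of Neeman's theorem \cite[Theorem 5.1]{N} (see also \cite[Theorem 1.3]{CLPT}), which the paper invokes, and whose proof runs through the Albanese map and the finiteness of ${\rm Pic}^0(X)\to{\rm Pic}^0(Y)$ rather than through leaf-space considerations. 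You should either quote that theorem or reproduce an argument of that type; as written, both halves of your proposal leave the hardest step unproved.
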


Theorem \ref{thm:main} $(i)$ holds not only when $[Y]$ is $C^\infty$-semipositive but also $[Y]$ is $C^r$-semipositive if $r\geq \max\{3, 2m\}$ (see Remark \ref{rmk:mainthm-1}). 
Theorem \ref{thm:main} $(i)$ is shown by combining \cite[Theorem 5.1]{N} (see also \cite[Theorem 1.3]{CLPT}) and the following: 

\begin{theorem}\label{thm:submain_1}
Let $X$ be a complex manifold and $Y$ be a non-singular compact hypersurface of $X$ with unitary flat normal bundle. 
Assume that $[Y]$ is $C^r$-semipositive. 
Then, for a positive integer $m$ with $\max\{3, 2m\}\leq r$, 
Ueda type of the pair $(Y, X)$ is larger than or equal to $m+1$: i.e. 
there exist open subsets $V_j$'s of $X$ which cover $Y$ and 
a holomorphic defining function $w_j$ of $V_j\cap Y$ on each $V_j$ such that 
$t_{jk}w_k=w_j+O(w_j^{m+2})$ holds on each $V_j\cap V_k$ for some constant $t_{jk}\in\U$. 
\end{theorem}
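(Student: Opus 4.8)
The plan is to argue by induction, upgrading the matching of the local defining functions from $t_{jk}w_k=w_j+O(w_j^{n+1})$ to $t_{jk}w_k=w_j+O(w_j^{n+2})$ as $n$ runs from $1$ to $m$, so that the final remainder is $O(w_j^{m+2})$. I fix local trivializations realizing $N_{Y/X}$ with locally constant unitary transition functions, let $s$ be the canonical section of $[Y]$ with $s=w_je_j$ on $V_j$, and let $\phi_j\in C^r$ be the local weight of $h$, so that $\sqrt{-1}\partial\overline\partial\phi_j\geq 0$ and $|s|_h^2=|w_j|^2e^{-\phi_j}$ is globally defined near $Y$. Here and below I write $z$ for local holomorphic coordinates along $Y$ and treat $w_j$ as the transverse variable. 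The obstruction to passing from remainder $O(w_j^{n+1})$ to $O(w_j^{n+2})$ is the $n$-th Ueda class $u_n\in H^1(Y,N_{Y/X}^{-n})$, and the whole problem reduces to showing $u_n=0$ for $1\leq n\leq m$.

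The base case rests on the maximum principle. Since the $t_{jk}$ are unimodular, the restrictions $\phi_j|_Y$ agree on overlaps and define a single function $\psi$ on $Y$; pulling back the inequality $\sqrt{-1}\partial\overline\partial\phi_j\geq 0$ to $Y$ shows $\psi$ is plurisubharmonic on the compact manifold $Y$, hence constant, and after rescaling $h$ I may take $\psi\equiv 0$. Consequently the tangential Hessian entry $\phi_{z\overline z}$ vanishes along $Y$; as it is a nonnegative diagonal entry of the complex Hessian having $Y$ as minimum locus, all of its first transverse derivatives vanish on $Y$ as well. These vanishings are the seed of the induction.

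For the inductive step I first translate $u_n$ into the language of the weight. Comparing the two expressions for $|s|_h^2$ and using $t_{jk}w_k=w_j+g_{jk}w_j^{n+1}+O(w_j^{n+2})$ gives $\phi_j-\phi_k=-\bigl(g_{jk}w_j^{n}+\overline{g_{jk}w_j^{n}}\bigr)+O(\cdots)$, so the transverse Taylor coefficient $a_j:=\tfrac{1}{n!}\partial_{w_j}^{n}\phi_j|_Y$ jumps across overlaps exactly by the holomorphic cocycle $g_{jk}$; hence $\{\overline\partial a_j\}$ patches to a global $\overline\partial$-closed representative of $u_n$, proportional to $\partial_{w_j}^{n}\partial_{\overline z}\phi_j|_Y\,d\overline z$. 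The heart of the matter is to show this representative vanishes using semipositivity. Assuming the vanishings from stages $<n$, I form the nonnegative $2\times 2$ principal minor $F=\phi_{w\overline w}\phi_{z\overline z}-|\phi_{w\overline z}|^2\geq 0$ of the complex Hessian; the lower-order normalizations force $F$ and its transverse Taylor coefficients to vanish on $Y$ up to order $2(n-1)$, and the first surviving coefficient is $\geq 0$ because $Y$ is a minimum locus of $F$. Computing that coefficient yields a pointwise inequality on $Y$ of the form $u\cdot\Delta_Y u\geq |\,\mathrm{rep}\,|^2+\cdots$, where $u$ is the global nonnegative function given by the relevant diagonal Hessian coefficient. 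Integrating over $Y$ against $\omega^{\dim_{\mathbb C}Y-1}$ for a Gauduchon metric $\omega$ and integrating by parts makes the left-hand side nonpositive, which forces the representative to vanish identically (and $u$ to be constant, feeding the next stage). Thus $u_n=0$, and solving the corresponding coboundary equation adjusts the $w_j$ to push the remainder to $O(w_j^{n+2})$.

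The main obstacle is precisely this inductive step for $n\geq 2$: one must select the nonnegative function built from the complex Hessian so that it vanishes on $Y$ to exactly the order that exposes the representative of $u_n$ in its leading coefficient, bookkeeping every vanishing and constancy produced earlier, and one must run the global integration by parts on a possibly non-K\"ahler $Y$, which is why a Gauduchon metric is invoked. Finally, extracting the stage-$n$ coefficient requires differentiating $\phi_j$ to order $2n$; since $n\leq m$, the hypothesis $r\geq\max\{3,2m\}$ supplies exactly the regularity needed, and this is what caps the conclusion at type $\geq m+1$ in the finitely differentiable setting rather than $\infty$.
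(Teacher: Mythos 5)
Your proposal follows the same skeleton as the paper's proof: a plurisubharmonic weight $\vp_j$ with $\psi=-\log|w_j|^2+\vp_j$, the normalization $\vp_j^{0,0}\equiv 0$ via the maximum principle, an induction on Ueda type in which $u_n$ is identified with the jump of the transverse coefficient $a_j=\vp_j^{n,0}$ (so that $\{\delbar a_j\}$ is a Dolbeault representative), vanishing of that representative extracted from positive semi-definiteness of the $2\times 2$ minor $F=(\vp_j)_{w_j\overline{w_j}}(\vp_j)_{z_j\overline{z_j}}-|(\vp_j)_{w_j\overline{z_j}}|^2$, and the modification $w_j\mapsto w_j-\vp_j^{n,0}w_j^{n+1}$. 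The one genuinely different ingredient is your global step: integrating the first surviving Taylor coefficient of $F$ over $Y$ against a Gauduchon metric, where the paper argues pointwise and invokes only the maximum principle.

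The gap sits exactly where you place ``the main obstacle'', and your stated form of the key inequality is not correct beyond $n=2$ (the case already treated in \cite{K2020}). The coefficient of $|w_j|^{2(n-1)}$ in $(\vp_j)_{w_j\overline{w_j}}(\vp_j)_{z_j\overline{z_j}}$ is the double sum
\[
\sum_{\substack{a+c=n-1\\ b+d=n-1}}(a+1)(b+1)\,\vp_j^{a+1,b+1}\,(\vp_j^{c,d})_{z_j\overline{z_j}},
\]
not a single term $u\cdot\Delta_Y u$; it collapses into terms of the favorable shape ${\rm Re}\,[v\,\Delta_Y\overline{v}]$ (which integrate to $-\int_Y|\del v|^2\leq 0$) only after one already knows that \emph{every} lower-order coefficient $\vp_j^{p,q}$ --- including the off-diagonal ones --- is constant or pluriharmonic. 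Your induction records only ``representative $=0$'' plus constancy of one diagonal function $u$ per stage, and that is too weak: at stage $4$, for instance, the term $\vp_j^{2,1}(\vp_j^{2,3})_{z_j\overline{z_j}}$ survives, and $\int_Y \vp_j^{2,1}\Delta_Y\vp_j^{2,3}$ has no sign unless $\vp_j^{2,1}$ is already known to be constant from stage $3$. Producing these constancies at every stage is the actual substance of the paper's proof: the pointwise lemma on nonnegative functions (Lemma 3.1), the pluriharmonicity of all coefficients with $p+q\leq 2m-2$ (Lemma 3.2), the two-fold expansion of $\vp_k-\vp_j$ (which you also need, but do not carry out, to see that $a_j$ jumps exactly by the Ueda cocycle), and the constancy of the $\vp_j^{m,\mu}$ (Lemma 3.3). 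With that bookkeeping done, the product term is in fact $o(|w_j|^{2m-2})$, so the leading coefficient of $F$ is exactly $-m^2|(\vp_j^{m,0})_{\overline{z_j}}|^2$ and the representative dies \emph{pointwise}; no Gauduchon metric or integration by parts is needed, the only global input being that pluriharmonic sections of unitary flat bundles on compact $Y$ are constant. Your integral scheme can in fact be completed --- once constancy of all coefficients with $p+q\leq n-1$, $p,q\geq 1$ and holomorphy of the $\vp_j^{k,0}$ are carried along, the surviving terms reduce to $\sum_{p+q=n,\,p,q\geq 1}pq\,\vp_j^{p,q}\Delta_Y\overline{\vp_j^{p,q}}$ minus a sum of squares, and integration then yields both the vanishing of the representative and the order-$n$ constancies in one stroke --- but as written, the step on which the whole induction closes is asserted rather than proved.
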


When $X$ is a surface, one can easily deduce Theorem \ref{thm:submain_1} from \cite[Theorem 1.1]{K2015}. 
Therefore, this theorem can be regarded as a generalization of \cite[Theorem 1.1]{K2015}. 
Note that this theorem is also a generalization of \cite[Theorem 1.6]{K2020}, which is on the case of $(r, m)=(\infty, 2)$. 
By combining Theorem \ref{thm:submain_1} and \cite[Theorem 3]{U}, one has the following: 

\begin{corollary}\label{cor:main}
Let $X$ be a complex manifold and $Y$ be a non-singular compact hypersurface of $X$ 
such that $N_{Y/X}^m$ is holomorphically trivial for some positive integer $m$. 
Assume that $[Y]$ is $C^\infty$-semipositive. 
Then there exists a neighborhood $V$ of $Y$ and a holomorphic function $f\colon V\to \mathbb{C}$ 
such that the pull-back of the divisor $\{0\}$ of $\mathbb{C}$ by $f$ coincides with $mY$. 
\end{corollary}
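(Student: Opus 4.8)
The plan is to verify that the hypotheses of Theorem \ref{thm:submain_1} are met, to read off from it a strong enough agreement of local defining functions, and then to appeal to \cite[Theorem 3]{U} in order to convert this jet-level agreement into an honest holomorphic defining function.

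First I would check that $N_{Y/X}$ is unitary flat, so that Theorem \ref{thm:submain_1} applies. Fix a nowhere-vanishing holomorphic section $s$ of $N_{Y/X}^m$ and local frames $e_j$ of $N_{Y/X}$ on an open cover $\{V_j\}$ of $Y$, and write $s = f_j\, e_j^{\otimes m}$ with $f_j\in\mathcal{O}^*(V_j)$. The Hermitian metric $h$ on $N_{Y/X}$ defined by $|e_j|_h^2 = |f_j|^{-2/m}$ is well defined (on $V_j\cap V_k$ one has $f_k = g_{jk}^m f_j$ for the transition functions $g_{jk}$ of $N_{Y/X}$) and flat, since $\log|f_j|$ is pluriharmonic; hence $N_{Y/X}$ is unitary flat. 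As $[Y]$ is $C^\infty$-semipositive, Theorem \ref{thm:submain_1} applies with $r=\infty$ for the given $m$ (the condition $\max\{3,2m\}\le r$ being automatic), and yields open sets $V_j$ covering $Y$ together with holomorphic defining functions $w_j$ of $V_j\cap Y$ such that
\[
t_{jk}w_k = w_j + O(w_j^{m+2}) \quad\text{on } V_j\cap V_k,\qquad t_{jk}\in\U .
\]
In other words, the Ueda type of $(Y,X)$ is at least $m+1$.

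Next I would record that $N_{Y/X}$ is torsion: let $e$ denote its (finite) order, so $e\mid m$ and $e+1\le m+1$, whence the type of $(Y,X)$ strictly exceeds $e$. This is precisely the regime handled by \cite[Theorem 3]{U}, which in the finite-order case makes a neighborhood of $Y$ flat (linearizable) and in particular produces a neighborhood $V$ of $Y$ and a holomorphic function $g\colon V\to\mathbb{C}$ with $g^*\{0\} = eY$. Setting $f := g^{m/e}$ then gives a holomorphic function on $V$ with $f^*\{0\} = (m/e)\cdot eY = mY$, as desired. Intuitively the functions $w_j^m$ already \emph{almost} glue: since $N_{Y/X}^m$ is holomorphically trivial one may arrange $t_{jk}^m=1$, and then the displayed relation gives $w_j^m - w_k^m = O(w_j^{2m+1})$, so the $w_j^m$ define $mY$ up to an error vanishing to high order along $Y$.

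The substantive step, and the expected main obstacle, is the passage from the finite-order jet condition (type $\ge m+1$) to an exact, convergent defining function. That the higher-order discrepancies between the $w_j^m$ can be removed for all orders simultaneously, and that the resulting formal object converges to a genuine holomorphic function, is exactly the content of \cite[Theorem 3]{U} in the unitary flat torsion case. The points I would be most careful about are matching the threshold in that theorem (the type exceeding the order $e$ of $N_{Y/X}$) with what Theorem \ref{thm:submain_1} supplies, and keeping track of the multiplicity so that the divisor of $f$ is exactly $mY$ rather than a proper multiple of $eY$.
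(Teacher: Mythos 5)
Your overall route (Theorem \ref{thm:submain_1} followed by Ueda's linearization theorem) is the same as the paper's, and your preliminary reductions are correct: $N_{Y/X}^m$ holomorphically trivial does imply $N_{Y/X}$ is unitary flat, the order $e$ of $N_{Y/X}$ divides $m$, and a function $g$ with $g^*\{0\}=eY$ would give $f\coloneqq g^{m/e}$ with $f^*\{0\}=mY$. The gap is exactly at the point you flagged: the hypothesis under which \cite[Theorem 3]{U} produces the linearized system is that the pair $(Y,X)$ is of \emph{infinite} type --- this is how the paper quotes it in Theorem \ref{thm:ueda_linearization} --- not that the type merely exceeds $e$. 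Note that, for torsion normal bundle of order $e$, the existence of a holomorphic $g$ on a neighborhood with $g^*\{0\}=eY$ is in fact \emph{equivalent} to infinite type: taking local $e$-th roots $\widetilde{w}_j\coloneqq u_j^{1/e}w_j$ of $g=u_jw_j^e$ yields a system with $\widetilde{w}_j=\zeta_{jk}\widetilde{w}_k$, $\zeta_{jk}^e=1$, which is of type $\infty$. So your claim that ``type $>e$ is precisely the regime handled by \cite[Theorem 3]{U}'' amounts to asserting the implication ``type $>e$ $\Rightarrow$ infinite type,'' which is proved nowhere and which your argument cannot supply: a single application of Theorem \ref{thm:submain_1} gives only type $\geq m+1$, while the Ueda classes at levels $ke$ live in $H^1(Y,N_{Y/X}^{-ke})=H^1(Y,\mathcal{O}_Y)$, which need not vanish, so pairs of finite type $2e, 3e,\dots$ are not ruled out by anything you have established; for such a pair the type exceeds $e$ yet the desired function cannot exist.

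The repair is immediate and is what the paper does: since $[Y]$ is $C^\infty$-semipositive, one may apply Theorem \ref{thm:submain_1} for \emph{every} positive integer $m'$ (the condition $\max\{3,2m'\}\leq r=\infty$ holds for all $m'$), so the type is $\geq m'+1$ for all $m'$, i.e. $(Y,X)$ is of infinite type, and Theorem \ref{thm:ueda_linearization} applies. Once one has the linear system $t_{jk}w_k=w_j$, the detour through the order $e$ is also unnecessary: after normalizing so that $t_{jk}^m=1$ (possible because $N_{Y/X}^m$ is holomorphically trivial and unitary flat), the functions $w_j^m$ glue directly to the desired $f$, as remarked in \S\ref{section:ueda_theory}.
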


Theorem \ref{thm:submain_1} is shown by using the same strategy as that of the proof of \cite[Theorem 1.6]{K2020}: we investigate the coefficient functions obtained by considering the Taylor expansion of a local weight function of a Hermitian metric of $[Y]$, and use them suitably to improve the system of local defining functions of $Y$ inductively. 
As one needs to linearize the transitions of local defining functions in higher order jet, 
much more careful arguments are needed than that we used in \cite{K2020}. 

Theorem \ref{thm:main} $(ii)$ follows from the following: 

\begin{theorem}\label{thm:submain_2}
Let $X$ be a connected compact K\"ahler manifold 
and $Y$ be a non-singular connected hypersurface with topologically trivial normal bundle. 
Assume either of the following two conditions: 
$(a)$ The Hartogs type extension theorem does {\bf not} hold on $X\setminus Y$, or 
$(b)$ There exists a connected open neighborhood $\Omega$ of $Y$ in $X$ such that the boundary $\del\Omega$ is a locally pseudoconvex $C^2$-smooth real submanifold of real codimension $1$. 
Then the following are equivalent: \\
$(i)$ $[Y]$ is $C^\infty$-semipositive. \\
$(ii)$ There exists a neighborhood $V$ of $Y$ such that the line bundle $[Y]|_V$ is unitary flat. \\
Moreover, under the assumption $(b)$, $\del\Omega$ is a Levi-flat hypersurface if the conditions $(i)$ and $(ii)$ hold. 
\end{theorem}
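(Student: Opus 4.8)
The plan is to reduce the equivalence to the single analytic assertion that the curvature of some Hermitian metric on $[Y]$ vanishes identically on a neighborhood of $Y$; this is exactly condition $(ii)$, since a metric with $\sqrt{-1}\Theta=0$ has locally pluriharmonic weights and hence defines a $\U$-valued locally constant transition cocycle, i.e. a unitary flat structure, and conversely. First I would observe that $Y$, being a compact hypersurface in the compact K\"ahler manifold $X$, is itself compact K\"ahler, so the topological triviality of $N_{Y/X}$ together with Kashiwara's theorem (\cite[\S 1]{U}) makes $N_{Y/X}$ unitary flat. Theorem \ref{thm:submain_1} then applies with $r=\infty$ and every $m$, so the Ueda type of $(Y,X)$ is infinite: there are local defining functions of $Y$ whose transitions are linear up to errors of arbitrarily high order. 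The implication $(ii)\Rightarrow(i)$ I regard as the soft direction: from a flat metric on $[Y]|_V$ one builds a global $C^\infty$-semipositive metric by a regularized maximum of the pluriharmonic weight near $Y$ with a fixed smooth weight away from $Y$, using compactness of $X$; this uses neither $(a)$ nor $(b)$.

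For $(i)\Rightarrow(ii)$, let $s$ be the canonical section of $[Y]$ with divisor $Y$ and let $h$ be a semipositive metric as in $(i)$. The engine is the function $\lambda:=-\log|s|_h^2$, which satisfies $\sqrt{-1}\ddbar\lambda=\sqrt{-1}\Theta_h\ge 0$ on $X\setminus Y$, tends to $+\infty$ along $Y$, and is bounded below; hence $\lambda$ is a plurisubharmonic exhaustion of $X\setminus Y$ whose superlevel sets $\Omega_c=\{\lambda>c\}$ form a fundamental system of neighborhoods of $Y$, with boundaries $\{\lambda=c\}$ that are pseudoconcave from the side $\Omega_c$. The tangential Levi form of $\{\lambda=c\}$ is the restriction of $\sqrt{-1}\Theta_h$ to the complex tangent spaces, so the whole argument turns on forcing this restriction to vanish near $Y$.

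The two hypotheses supply this vanishing by complementary routes. Under $(a)$ I would argue by contraposition: if $\sqrt{-1}\Theta_h$ had a strictly positive complex tangential direction somewhere near $Y$, then along the pseudoconcave level sets $\{\lambda=c\}$ a Hartogs/Hans Lewy extension across the resulting negative Levi eigenvalue would propagate any holomorphic function on $\Omega_c\setminus Y$ outward, and iterating $c$ down to $\min_{X\setminus Y}\lambda$ would extend the function to all of $X\setminus Y$, contradicting the assumed failure of the Hartogs type extension theorem. Under $(b)$ I would instead play the given pseudoconvex boundary $\del\Omega$ against the pseudoconcave level sets of $\lambda$: a Kontinuit\"atssatz-type comparison over the region between $\del\Omega$ and a level set shows that these opposite curvature signs can coexist near $Y$ only if the tangential Levi form degenerates. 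In either case $\sqrt{-1}\Theta_h$ acquires no tangential positivity near $Y$, whence (as $\dim X\ge 2$) the full complex Hessian of $\lambda$ has rank at most one, $\lambda$ is maximal, and its level sets sweep out a Levi-flat foliation of a neighborhood of $Y$. Combining this with the infinite Ueda type of $(Y,X)$ to pin down the remaining normal direction upgrades the degeneracy to $\sqrt{-1}\Theta_h\equiv 0$ near $Y$, which is $(ii)$. For the final assertion, once $(i)$ and $(ii)$ hold a neighborhood of $Y$ carries a unitary flat model in which $\lambda$ is pluriharmonic and its level sets are Levi-flat, and a maximum-principle argument in this model forces the pseudoconvex $\del\Omega$ to be saturated by these leaves, so $\del\Omega$ is Levi-flat.

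The hard part is the last upgrade. Semipositivity alone leaves open the possibility that $\sqrt{-1}\Theta_h$ is positive only in the direction normal to $Y$ while vanishing tangentially, which neither the bumping argument nor the two-sided pseudoconvexity excludes by itself; this is precisely where the all-order linearization of Theorem \ref{thm:submain_1} must be combined with the a priori estimates furnished by the weight $\lambda$. Concretely, I expect the principal obstacle to be converting the formal flatness coming from infinite Ueda type into a convergent, honest unitary flat metric on a fixed neighborhood $V$, controlled uniformly against the possible small-divisor behaviour of the unitary flat normal bundle $N_{Y/X}$ --- a rigidity-and-convergence step for which the plurisubharmonic exhaustion $\lambda$ supplies the essential quantitative input.
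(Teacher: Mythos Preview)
Your outline matches the paper closely through the first stage: the implication $(ii)\Rightarrow(i)$ is indeed the soft direction, and the reduction of $(i)\Rightarrow(ii)$ to the Levi-flatness of the level sets $\{\psi=N\}$ near $Y$ is exactly how the paper begins. The paper packages your contraposition argument under $(a)$ and $(b)$ into a single appeal to Ohsawa's Hartogs-type theorem \cite[Theorem 0.1]{O2}: one first checks that the Hartogs property fails on $\Omega_N$ for large $N$ (under $(b)$ because $\partial\Omega_N$ is disconnected, under $(a)$ by a direct diagonal argument), and then Ohsawa's theorem forces $\partial\Omega_N$ to be Levi-flat. This also gives the Levi-flatness of $\partial\Omega$ under $(b)$ immediately, since $\partial\Omega$ is a component of $\partial\Omega_N$.

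The genuine gap is your ``upgrade'' step. You write that combining rank-one curvature with infinite Ueda type ``upgrades the degeneracy to $\sqrt{-1}\Theta_h\equiv 0$ near $Y$'', and you frame the obstacle as converting the formal linearization from Ueda theory into a convergent one against small divisors. This is not what the paper does, and I do not see how to make your version work: infinite Ueda type gives formal flatness of $[Y]$ to all orders along $Y$, but there are examples (Ueda's own, and \cite{KO}) where the pair is of infinite type yet $[Y]|_V$ is \emph{not} unitary flat on any neighborhood. So neither the formal data nor the a priori bound from $\psi$ is enough by itself; some geometric input from the Levi-flatness is essential, and you have not said how you would use it beyond establishing the rank condition.

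The paper's route is quite different and more dynamical. From the rank-one condition one gets the Monge--Amp\`ere foliation $\mathcal{F}$ of $\sqrt{-1}\partial\bar\partial\psi$ on $V_0\setminus Y$. The first serious step is to show $\mathcal{F}$ is \emph{holomorphic} in the transversal direction: one writes $\partial\bar\partial\psi=F\cdot\partial\psi\wedge\bar\partial\psi$, proves $F$ is leafwise constant, and then---by a global argument using compact leaves, the Albanese map, and the finiteness of ${\rm Pic}^0(X)\to{\rm Pic}^0(Y)$---that $F$ is constant on each level $\{\psi=N\}$, so $F=G\circ\psi$ and a suitable $\chi\circ\psi$ is pluriharmonic. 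The second step uses the detailed coefficient expansion from (Assertion)$_2$ of Theorem \ref{thm:submain_1} to extend $\mathcal{F}$ holomorphically across $Y$ with $Y$ as a leaf. Only then does the small-divisor issue you anticipate actually appear, and it is resolved not by summing a formal series but by analyzing the holonomy ${\rm Hol}_{\mathcal{F},Y}\colon\pi_1(Y,*)\to{\rm Diff}(\mathbb{C},0)$: one checks its linear parts lie in $\U$, and then P\'erez-Marco's theorem on non-linearizable irrationally indifferent germs \cite{P} is used in contrapositive---a non-linearizable holonomy would produce leaves accumulating on $Y$, contradicting the Levi-flat level structure. Once the holonomy is linear in a suitable transversal coordinate, one transports that coordinate along $Y$ to obtain defining functions with $\U$-valued transitions, which is $(ii)$. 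None of this foliation/holonomy machinery appears in your proposal, and it is the heart of the proof.
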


Note that Corollary \ref{cor:main} and Theorem \ref{thm:submain_2} give a partial solution of Conjecture \cite[Conjecture 1.1]{K2020}, which is one of the biggest motivations of the present paper. 
In \cite{O2}, Ohsawa showed two types of Hartogs type extension theorems (see also \cite[\S 5.2.2]{O3}). 
When $Y$ is a smooth hypersurface of a compact connected K\"ahler manifold $X$ (for simplicity), \cite[Theorem 0.2]{O2} says that the Hartogs type extension theorem holds on $X\setminus Y$ if the normal bundle $N_{Y/X}$ admits a smooth Hermitian metric $h$ such that the Chern curvature $\sqrt{-1}\Theta_h$ is positive semi-definite and not identically zero. 
Theorem \ref{thm:main} $(ii)$ can be regarded as a variant of this theorem. 
Theorem \ref{thm:submain_2} is shown by using \cite[Theorem 0.1]{O2}, which is another type of the Hartogs type extension theorem (see also \S \ref{section:ohsawa_Hartogs_ext} here). 
In order to apply this theorem for proving Theorem \ref{thm:submain_2}, 
we construct a plurisubharmonic function $\psi$ on $X\setminus Y$ with logarithmic growth along $Y$ by assuming the condition $(i)$ of Theorem \ref{thm:submain_2}. 
We will construct such a function $\psi$ so that the difference between $\psi$ and the local weight function of a suitable Hermitian metric of $[Y]$ is pluriharmonic. 
We will apply \cite[Theorem 0.1]{O2} and our detailed expression of the local weight functions obtained in the proof of Theorem \ref{thm:submain_1}, and count the number of positive eigenvalues of the complex Hessian of $\psi$, which enables us to consider the Monge--Amp\`ere foliation. 
Theorem \ref{thm:submain_2} is shown by running a complex-dynamical argument for the holonomy map of this foliation such as the argument in \cite[\S 4]{KO}. Here we apply P\'erez-Marco's theory on the dynamics of the non-linearizable irrationally indifferent fixed point \cite{P}. 

By combining the arguments in the proofs of Theorem \ref{thm:submain_1} and Theorem \ref{thm:submain_2}, we also have the following result as a partial solution of Conjecture \cite[Conjecture 1.1]{K2020}. 
\begin{theorem}\label{thm:main_last}
Let $X$ be a complex manifold and $Y$ be a non-singular compact hypersurface of $X$  with unitary flat normal bundle. 
Then the following are equivalent: \\
$(i)$ There exists a neighborhood $V$ of $Y$ such that $[Y]|_V$ admits a real-analytic Hermitian metric with semipositive curvature. \\
$(ii)$ There exists a neighborhood $V$ of $Y$ such that $[Y]|_V$ is unitary flat. 
\end{theorem}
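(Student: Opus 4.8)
The plan is to establish the two implications separately, with $(ii)\Rightarrow(i)$ being essentially formal and $(i)\Rightarrow(ii)$ carrying all the weight. For $(ii)\Rightarrow(i)$: in trivializations witnessing unitary flatness the transition functions are constants in $\U$, so the metric $h_0$ determined by $|e_j|_{h_0}\equiv 1$ is a globally well-defined real-analytic Hermitian metric on $[Y]|_V$ with $\sqrt{-1}\Theta_{h_0}\equiv 0$, whose curvature is in particular semipositive. For $(i)\Rightarrow(ii)$, let $h$ be a real-analytic Hermitian metric on $[Y]|_V$ with $\sqrt{-1}\Theta_h\ge 0$. Being real-analytic, $h$ is of class $C^\infty$, so $[Y]$ is $C^\infty$-semipositive; applying Theorem \ref{thm:submain_1} for every positive integer $m$ shows that the Ueda type of $(Y,X)$ is infinite, and I retain the detailed Taylor expansions of the local weight functions produced in its proof. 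I would then argue according to whether $N_{Y/X}^m$ is holomorphically trivial for some positive integer $m$.

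If $N_{Y/X}^m$ is holomorphically trivial for some $m$, then, as $[Y]$ is $C^\infty$-semipositive, Corollary \ref{cor:main} provides a neighborhood $V$ and a holomorphic $f\colon V\to\mathbb C$ with $f^*\{0\}=mY$. Writing $f=u_j\,w_j^m$ on each $V_j$ with $u_j$ a nonvanishing holomorphic unit and $w_j$ a local defining function of $Y$, and fixing local branches $u_j^{1/m}$, the functions $\tilde w_j\coloneqq u_j^{1/m}w_j$ satisfy $\tilde w_j^m=f=\tilde w_k^m$ on overlaps. Hence $\tilde w_j=\zeta_{jk}\tilde w_k$ with $\zeta_{jk}$ a locally constant $m$-th root of unity, and the corresponding trivializations exhibit $[Y]|_V$ as unitary flat.

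If $N_{Y/X}^m$ is holomorphically trivial for no $m$, I would follow the method of Theorem \ref{thm:submain_2}. Using the weight expansions from the proof of Theorem \ref{thm:submain_1}, I would construct near $Y$ a real-analytic plurisubharmonic function $\psi$ with logarithmic growth along $Y$ so that the difference between $\psi$ and the local weight of a suitable Hermitian metric on $[Y]$ is pluriharmonic; its complex Hessian is then semipositive and degenerate, giving a real-analytic Monge--Amp\`ere foliation for which $Y$ is a leaf. The transverse holonomy around $Y$ gives, for a suitable loop, a germ of biholomorphism of $(\mathbb C,0)$ fixing the origin with multiplier $t\in\U$ which, by non-torsionality of $N_{Y/X}$, is not a root of unity, so the origin is an irrationally indifferent fixed point. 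I would then run the complex-dynamical argument of \cite[\S 4]{KO}: the real-analytic $\psi$ yields a nonconstant holonomy-invariant structure, and by P\'erez-Marco's theory \cite{P} this excludes the non-linearizable case, so the holonomy germ is linearizable. A linearizing coordinate converts the infinite-order (formal) flatness coming from the infinite Ueda type into honest local trivializations with constant $\U$-valued transitions, i.e. $[Y]|_V$ is unitary flat.

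The main obstacle is exactly the holonomy dynamics in the non-torsion case. An irrationally indifferent germ may be non-linearizable (the Cremer/hedgehog phenomenon), and infinite Ueda type by itself only linearizes the transitions of the defining functions to infinite formal order, not convergently. The role of real-analyticity is decisive and twofold: it makes $\psi$ and the induced Monge--Amp\`ere foliation real-analytic, and the resulting nonconstant real-analytic invariant of the holonomy is exactly what P\'erez-Marco's theory requires in order to rule out non-linearizability. In this way real-analyticity replaces the compactness, K\"ahler, and pseudoconvexity/Hartogs hypotheses of Theorem \ref{thm:submain_2} and lets the entire argument be carried out in a neighborhood of $Y$.
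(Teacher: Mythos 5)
Your overall architecture matches the paper's: the flat metric handles $(ii)\Rightarrow(i)$; real-analytic implies $C^\infty$, so Theorem \ref{thm:submain_1} applied for every $m$ gives infinite Ueda type; the torsion case is settled by Ueda linearization (your extraction of $m$-th roots from the function $f$ of Corollary \ref{cor:main} is correct and equivalent to the paper's use of Theorem \ref{thm:ueda_linearization}); and the non-torsion case is to be settled by the Monge--Amp\`ere foliation, holonomy, and P\'erez-Marco machinery of Theorem \ref{thm:submain_2}. However, in the non-torsion case your proposal has a genuine gap at exactly the step that carries all the difficulty. You write that the complex Hessian of $\psi$ is ``semipositive and degenerate, giving a real-analytic Monge--Amp\`ere foliation,'' but neither the degeneracy nor, more importantly, the constant rank-one property needed for the Sommer/Bedford--Kalka foliation is automatic from the construction of $\psi$. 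In Theorem \ref{thm:submain_2} this rank-one property (equivalently, Levi-flatness of the level sets $\{\psi=N\}$) is \emph{not} a consequence of how $\psi$ is built: it is deduced from Ohsawa's Theorem \ref{thm:ohsawa_Hartogs} combined with Lemma \ref{lem:Hartogs_does_not_hold}, i.e.\ from the global hypotheses (compact K\"ahler $X$ plus $(a)$ or $(b)$). In Theorem \ref{thm:main_last}, $X$ is an arbitrary complex manifold with no Hartogs-type or pseudoconvexity hypothesis, so that route is closed; and infinite Ueda type by itself only gives infinite-order information \emph{along} $Y$, saying nothing about $\ddbar\psi$ off $Y$ --- this is precisely why the merely $C^\infty$ statement remains a conjecture rather than a theorem.

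The paper fills this gap with the one argument your proposal lacks, and it is here --- not in the dynamics --- that real-analyticity is decisive. Setting $F_j\coloneqq |\ddbar\psi|_{g_j}/|\del\psi\wedge\delbar\psi|_{g_j}$, which is real-analytic near $U_j$, one considers $\rho_j\coloneqq \ddbar\psi - F_j\cdot\del\psi\wedge\delbar\psi$. For every $m$, (Assertion)$_{m+2}$ from the proof of Theorem \ref{thm:submain_1} provides an adapted defining function $\widehat{w}_j$ in which $\vp_j$ agrees to order $m+2$ with a polynomial in $(\widehat{w}_j,\overline{\widehat{w}_j})$ with \emph{constant} coefficients; hence both $\ddbar\psi$ and $\del\psi\wedge\delbar\psi$ are proportional to $d\widehat{w}_j\wedge d\overline{\widehat{w}_j}$ up to $O(|w_j|^{m+1})$, so $\rho_j=O(|w_j|^{m+1})$. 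Since $m$ is arbitrary and $\rho_j$ is real-analytic, $\rho_j\equiv 0$ near $Y$ (equation (\ref{eq:Leviflat_C_omega})); this identity is exactly the Levi-flatness of the level sets, and only then does ``the same argument as Theorem \ref{thm:submain_2}'' become available. By contrast, your stated second role for real-analyticity --- that a ``nonconstant real-analytic invariant'' is what P\'erez-Marco's theory requires --- is a misattribution: in the paper the exclusion of the non-linearizable (hedgehog) case uses only that every leaf lies in a level set of $\psi$ (Lemma \ref{lem:hojyo} $(i)$), an argument that already works for the $C^\infty$ function $\psi$ of Theorem \ref{thm:submain_2}. As written, your proposal therefore assumes the conclusion of the paper's key new step rather than proving it.
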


The organization of the paper is as follows. 
In \S 2, we explain some terms and collect some known results which we will use in this paper. 
In \S 3, we prove Theorem \ref{thm:submain_1} and Corollary \ref{cor:main}. 
In \S 4, we prove Theorem \ref{thm:submain_2}. 
In \S 5, we prove Theorem \ref{thm:main} and Theorem \ref{thm:main_last}. 
\vskip3mm
{\bf Acknowledgment. } 
The author would like to give heartfelt thanks to 
Professor Takeo Ohsawa for his comments and suggestions of inestimable value. 
He is also grateful to Professor Shin-ichi Matsumura for discussions on some results  related to Theorem \ref{thm:main} $(i)$. 
This work was supported by JSPS Grant-in-Aid for Early-Career Scientists 20K14313, 
by MEXT Grant-in-Aid for Leading Initiative for Excellent Young Researchers（LEADER) No. J171000201,  
and partly by Osaka City University Advanced Mathematical Institute (MEXT Joint Usage/Research Center on Mathematics and Theoretical Physics).

\section{Preliminary}

Let $X$ be a complex manifold and $Y$ be a compact non-singular hypersurface of $X$. 
Assume that the normal bundle $N_{Y/X}$ is unitary flat. 
In this section, we explain some terms and collect some known results which we will use in this paper. 

\subsection{Hermitian metrics of $[Y]$ and plurisubharmonic functions on $X\setminus Y$}

For $r\in \{2, 3, \dots\}\cup\{\infty\}$, 
we show the following proposition in this subsection. 

\begin{proposition}\label{prop:1}
Let $X$ be a complex manifold and $Y$ be a compact non-singular hypersurface of $X$ with unitary flat normal bundle. 
The following are equivalent: \\
$(i)$ $[Y]$ is $C^r$-semipositive. \\
$(ii)$ There exists a plurisubharmonic function $\psi\colon X\setminus Y\to\mathbb{R}$ of class $C^r$ which satisfies the following condition: 
For any point $p\in Y$, there exist a neighborhood $W$ of $p$ in $X$, 
a holomorphic defining function $w\colon W\to \mathbb{C}$ of $W\cap Y$, 
and a plurisubharmonic function $\vp\colon W\to \mathbb{R}$ of class $C^r$ such that 
$\psi=-\log |w|^2+\vp$ holds on $W\setminus Y$ and 
that the complex Hessian of $\vp$ has at least one positive eigenvalue at any point of $W$. 
\end{proposition}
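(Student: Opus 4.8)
The plan is to use the standard dictionary between Hermitian metrics on $[Y]$ and functions on $X\setminus Y$. If $s$ denotes the canonical holomorphic section of $[Y]$ with divisor $Y$ and $h$ is a Hermitian metric, then $\psi:=-\log|s|_h^2$ satisfies $\sqrt{-1}\ddbar\psi=\sqrt{-1}\Theta_h$ on $X\setminus Y$, and in a local trivialization where $s$ corresponds to a holomorphic defining function $w$ one has $|s|_h^2=|w|^2e^{-\vp}$, so that $\psi=-\log|w|^2+\vp$ with $\sqrt{-1}\ddbar\vp=\sqrt{-1}\Theta_h$ there. With this dictionary the implication $(ii)\Rightarrow(i)$ is the easy direction: given such a $\psi$, I would define $h$ by $|s|_h^2:=e^{-\psi}$, and the local description $\psi=-\log|w|^2+\vp$ shows that $h$ is a well-defined Hermitian metric of class $C^r$ across $Y$ whose local weight is $\vp$. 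Its curvature equals $\sqrt{-1}\ddbar\vp\geq 0$ on each $W$ and $\sqrt{-1}\ddbar\psi\geq 0$ on $X\setminus Y$, hence $h$ is $C^r$-semipositive.

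For the substantial direction $(i)\Rightarrow(ii)$ I would exploit the unitary flatness of $N_{Y/X}$. Since $N_{Y/X}$ is unitary flat, $[Y]$ carries a flat Hermitian metric $h_{\mathrm{flat}}$ on a neighborhood $U$ of $Y$; set $\rho:=|s|^2_{h_{\mathrm{flat}}}$, a nonnegative real-analytic function on $U$ vanishing exactly on $Y$. The key point is that, locally, one can absorb the pluriharmonic weight of $h_{\mathrm{flat}}$ into the defining function: there is a holomorphic defining function $w$ with $\rho=|w|^2$, so that $\sqrt{-1}\ddbar\rho=\sqrt{-1}\del w\wedge\delbar\bar w$ is everywhere semipositive of rank one, with its single positive eigenvalue in the direction transverse to $Y$ at every point of $U$. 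This is exactly the transverse positivity that the (possibly degenerate) semipositive curvature of $[Y]$ fails to provide on its own.

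Now let $h_0$ be a $C^r$ metric with $\sqrt{-1}\Theta_{h_0}\geq 0$ furnished by $(i)$, and put $\psi_0:=-\log|s|^2_{h_0}$, so that near $Y$ one has $\psi_0=-\log\rho+u$ with $u:=\log(\rho/|s|^2_{h_0})$ of class $C^r$ (the singular terms cancel, so $u$ is smooth across $Y$) and $\sqrt{-1}\ddbar u=\sqrt{-1}\Theta_{h_0}\geq 0$. I would then fix a small $\ve>0$ and a convex nondecreasing cut-off $\chi\in C^\infty(\mathbb{R})$ with $\chi(t)=t$ for $t\leq\delta_1$ and $\chi(t)\equiv c$ for $t\geq\delta_2$, and define
\[
\psi:=\psi_0+\ve\,\chi(\rho)
\]
on $U\setminus Y$, extended by $\psi_0+\ve c$ on $X\setminus U$. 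Because $\chi$ is convex nondecreasing and $\rho$ is plurisubharmonic, $\chi(\rho)$ is plurisubharmonic on $U$; being constant for $\rho\geq\delta_2$, it glues to a global $C^r$ plurisubharmonic correction, so $\psi$ is a globally defined $C^r$ plurisubharmonic function on $X\setminus Y$. Near a point of $Y$, taking $w$ as above gives $\psi=-\log|w|^2+\vp$ with $\vp=u+\ve\rho$, and
\[
\sqrt{-1}\ddbar\vp=\sqrt{-1}\Theta_{h_0}+\ve\,\sqrt{-1}\ddbar\rho\geq\ve\,\sqrt{-1}\ddbar\rho,
\]
which is semipositive and has a positive transverse eigenvalue at every point of a sufficiently thin tube $W$; this yields $(ii)$.

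The main obstacle is precisely the transverse-positivity requirement in $(ii)$, which semipositivity of $[Y]$ alone does not guarantee. I expect the crux to be the observation that the flat metric $h_{\mathrm{flat}}$ produces a function $\rho$ that is locally a squared modulus $|w|^2$, and hence contributes exactly one positive (transverse) eigenvalue everywhere near $Y$; the convexity of $\chi$ is then what allows me to add this contribution near $Y$ and patch it to a global plurisubharmonic function on $X\setminus Y$ without destroying semipositivity away from $Y$.
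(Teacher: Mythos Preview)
There is a genuine gap in the direction $(i)\Rightarrow(ii)$. Your claim that ``since $N_{Y/X}$ is unitary flat, $[Y]$ carries a flat Hermitian metric $h_{\mathrm{flat}}$ on a neighborhood $U$ of $Y$'' is false in general. For a holomorphic line bundle, admitting a Hermitian metric with identically vanishing Chern curvature is equivalent to being holomorphically unitary flat: if the local weights are pluriharmonic one can absorb them into the frames, and the resulting transition functions are holomorphic of modulus one, hence locally constant in $\U$. Whether $[Y]|_U$ is unitary flat, given only that its restriction $N_{Y/X}=[Y]|_Y$ is, is precisely the question governed by Ueda's obstruction theory; pairs $(Y,X)$ of finite Ueda type, and certain pairs of infinite type, give examples where no such $h_{\mathrm{flat}}$ exists on any neighborhood of $Y$. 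This is exactly the condition $(ii)$ appearing in Theorem~\ref{thm:submain_2} and Theorem~\ref{thm:main_last}, so invoking it in this preliminary proposition would make those results circular. Consequently your function $\rho$ is not available, and without it your perturbation $\ve\,\chi(\rho)$ has no reason to be plurisubharmonic.

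The paper extracts the needed transverse positivity from the given semipositive metric $h$ itself rather than from a hypothetical flat one. With $\Psi:=-\log|f|_h^2$ one sets $\widehat\psi:=\Psi+e^{-\Psi}$; since $t\mapsto t+e^{-t}$ is increasing and strictly convex on $(0,\infty)$, $\widehat\psi$ is plurisubharmonic on $\{\Psi>0\}$, while the added term $e^{-\Psi}=|f|_h^2$, locally equal to $|w|^2e^{-\vp}$, supplies a positive eigenvalue in the $w$-direction near $Y$. One then patches to a global function on $X\setminus Y$ by composing with a convex nondecreasing $\lambda$ that is constant for small values. In effect the paper replaces your $\rho=|s|^2_{h_{\mathrm{flat}}}$ by $|f|_h^2$, which is always at hand; once you make that substitution your scheme essentially becomes the paper's.
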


\begin{proof}
First we show that the assertion $(ii)$ implies $(i)$. 
We denote by $\pi\colon [Y]\to X$ the projection. 
Take a function $\psi$ as in the assertion $(ii)$. 
Consider the Hermitian metric $h$ on $[Y]$ defined by 
\[
\langle \xi, \eta\rangle_{h, x}
\coloneqq e^{-\psi(x)}\cdot \frac{\xi}{f(x)}\cdot \overline{\left(\frac{\eta}{f(x)}\right)}, 
\]
where $x\in X$ is a point, $\xi, \eta\in[Y]_x\coloneqq \pi^{-1}(x)$, and $f\in H^0(X, [Y])=H^0(X, \mathcal{O}_X(Y))$ is a canonical section of $[Y]$: i.e. $f$ is a global holomorphic section of $[Y]$ whose zero divisor coincides with the divisor $Y$. 
Then it can be easily checked that $h$ is a Hermitian metric on $[Y]$ of class $C^r$ with semipositive curvature. 

Next we show that the assertion $(i)$ implies $(ii)$. 
Take a Hermitian metric $h$ on $[Y]$ of class $C^r$ with semipositive curvature. 
Using a canonical section $f\in H^0(X, [Y])$ of $[Y]$, we define a function $\Psi$ by letting 
$\Psi\coloneqq -\log |f|^2_h$. 
Note that this function $\Psi$ satisfies the property as in the assertion $(ii)$ except the condition that 
the complex Hessians have at least one positive eigenvalue. 

Consider the function $\widehat{\psi}\colon X\setminus Y\to \mathbb{R}$ defined by 
$\widehat{\psi} \coloneqq \Psi + e^{-\Psi}$. 
As the function $\chi\colon\mathbb{R}_{>0}\ni t\mapsto t+e^{-t}\in\mathbb{R}$ is increasing and convex, 
this function $\widehat{\psi}$ is plurisubharmonic on $V_1\coloneqq \{x\in X\setminus Y\mid \psi(x)>0\}$. 
It follows from a simple computation by using local coordinates that 
the complex Hessian of the local function $\widehat{\psi}$ has at least one positive eigenvalue on a neighborhood of any point of $Y$ in $X$. 
Take a function $\lambda\colon \mathbb{R}\to \mathbb{R}$ of class $C^\infty$ 
such that $\lambda$ is non-decreasing, convex, $\lambda|_{(-\infty, \ve]}\equiv 1/2$ holds for a positive number $\ve$, and that $\lambda(t)=t$ holds on the interval $[1, +\infty)$ (one can actually construct such a function $\lambda$ by taking {\it the regularized maximum} of the constant function $1/2$ and the identity function for example. See \cite[\S 5.E]{D}). 
Then it is easily observed that the function $\psi$ defined by 
\[
\psi(p) \coloneqq 
\begin{cases}
\lambda\circ\widehat{\psi}(p) & \text{if}\ p\in V_1\\
1/2 &  \text{if}\ p\in X\setminus V_1
\end{cases}
\]
enjoys the condition. 
\end{proof}

\begin{remark}\label{rmk:vp00}
Take open sets $V_j$'s of $X$ which cover $Y$. 
By shrinking $V_j$'s, we often consider local holomorphic defining functions $w_j$'s of $Y$ on $V_j$'s such that 
$dw_j=t_{jk}dw_k$ holds on each $V_j\cap V_k\cap Y$ for some constant $t_{jk}\in \U$. 
For a function $\psi$ as in Proposition \ref{prop:1}, 
the function $\vp_j\coloneqq  \psi + \log |w_j|^2$ can be regarded as the local weight function of a metric on $[Y]$ with semipositive curvature. 
As is easily observed, $\{(V_j\cap Y, \vp_j|_{V_j\cap Y})\}$ glues up to define a global plurisubharmonic function on a compact manifold $Y$. 
Thus one can deduce from the maximum principle that there exists a constant $A\in \mathbb{R}$ such that $\vp_j|_{V_j\cap Y}\equiv A$. 
Therefore, one may assume that $\vp_j|_{V_j\cap Y}\equiv 0$ by replacing $\psi$ with $\psi-A$. 
\end{remark}

\begin{remark}\label{rmk:omega_boundary}
It is also easily observed that, for a function $\psi$ as in Proposition \ref{prop:1}, 
it holds that the set $\{\psi>N\}\cup Y$ is a sufficiently small tubular neighborhood of $Y$ whose boundary is a real submanifold of class $C^r$ for a sufficiently large number $N$. 
\end{remark}

\subsection{Ueda's theory on a neighborhood of $Y$}\label{section:ueda_theory}

In \cite{U}, Ueda investigated the complex analytic structure on a neighborhood of $Y$. 
In this subsection, we give a brief explanation on this Ueda theory. 

Take a finite open covering $\{U_j\}$ of $Y$, a neighborhood $V_j$ of $U_j$ in $X$ with $V_j\cap Y=U_j$, and 
a defining function $w_j\colon V_j\to \mathbb{C}$ of $U_j$ for each $j$. 
In what follows, we always assume that $\{U_j\}$ and $\{V_j\}$ are fine enough so that $U_j$'s and $V_j$'s are simply connected and Stein, 
and that $U_{jk}\coloneqq U_j\cap U_k$ is empty if and only if $V_{jk}\coloneqq V_j\cap V_k$ is empty. 

As is simply observed, one may assume that $dw_j=t_{jk}dw_k$ holds on each $U_{jk}$ for some constant $t_{jk}\in\mathrm{U}(1)$ by changing $w_j$'s if necessary. 
We call such a system $\{(V_j, w_j)\}$ of local defining functions of $Y$ a system of {\it type $1$}. 
By shrinking $V_j$'s if necessary again, we assume that, for each $j$, there exists a holomorphic surjection ${\rm Pr}_{U_j}\colon V_j\to U_j$ such that the restriction ${\rm Pr}_{U_j}|_{U_j}$ is the identity and that $(w_j, z_j\circ {\rm Pr}_{U_j})$ are coordinates of $V_j$, where $z_j$ is a coordinate of $U_j$. 
In what follows, for any holomorphic function $f$ on $U_j$, we denote by the same letter $f$ the pull-back ${\rm Pr}_{U_j}^*f\coloneqq f\circ {\rm Pr}_{U_j}$. 
On $U_j$ and $U_k$ such that $U_{jk}\not=\emptyset$, 
one has the series expansion 
\[
t_{jk} w_k
=
w_j+\sum_{n= 2}^\infty f_{kj, n}(z_j) \cdot w_j^n, 
\]
where 
$f_{kj, n}$'s are holomorphic functions on $U_{jk}$ (we regard this also as a function defined by $({\rm Pr}_{U_j}|_{{\rm Pr}_{U_j}^{-1}(U_{jk})})^*f_{kj, n}$). 
For a positive integer $m$, we say that the system $\{(V_j, w_j)\}$ of local defining functions is of {\it type $m$} if $f_{kj, \ell}\equiv 0$ holds for any $\ell$ with $\ell\leq m$ and any $j, k$ with $U_{jk}\not=\emptyset$. 
If $\{(V_j, w_j)\}$ is of type $m$, it follows that 
$\left\{\left(U_{jk}, \ f_{kj, m+1} \right)\right\}$ 
satisfies the $1$-cocycle condition, 
and thus it defines an element of $H^1(Y, N_{Y/X}^{-m})$ (see \cite[\S 2]{U} for details). 
We denote this cohomology class by $u_m(Y, X)$, which is the definition of {\it $m$-th Ueda class}. 
Ueda class $u_m(Y, X)$ is well-defined up to the action of $\mathrm{U}(1)$ on $H^1(Y, N_{Y/X}^{-m})$, namely $[u_m(Y, X)]\in H^1(Y, N_{Y/X}^{-m})/\mathrm{U}(1)$ does not depend on the choice of the system of type $m$. 

From a simple observation, it follows that there exists a system of type $m+1$ if and only if $u_m(Y, X)=0$, 
and that one can also define $u_{m+1}(Y, X)$ if $u_m(Y, X)=0$. 
The pair $(Y, X)$ is said to be of {\it finite type} if, for some positive integer $n$, there exists a system of type $n$ such that $u_n(Y, X)\not=0$. 
In this case, $n$ is said to be {\it Ueda type} of the pair $(Y, X)$. 
Otherwise, the pair is said to be of {\it infinite type}. 
We say that Ueda type of the pair $(Y, X)$ is $\infty$ in this case. 

In \cite[\S 4]{U}, Ueda gave some sufficient conditions for the existence of a system $\{(V_j, w_j)\}$ such that $t_{jk}w_k=w_j$ holds on each $V_{jk}$ when the pair $(Y, X)$ is of infinite type. 
Note that $[Y]|_V$ is unitary flat for $V\coloneqq \bigcup_jV_j$ if and only if such a system exists. 
Note also that, if $N_{Y/X}^m$ is holomorphically trivial for some positive integer $m$, there exists a holomorphic function $f\colon V\to \mathbb{C}$ such that $f^*\{0\}=mY$ holds as divisors when such a system exists. 
Indeed, such $f$ can be constructed by letting $f|_{V_j}\coloneqq w_j^m$ ($w_j^m$'s glue up to each other since $t_{jk}^m=1$ holds for each $j$ and $k$ in this case). 
Though the following Ueda's theorem is stated only when $X$ is a surface in \cite{U}, this theorem and its proof in \cite[\S 4]{U} also work in general dimension (see also \cite{K2018}). 

\begin{theorem}[a part of {\cite[Theorem 3]{U}}]\label{thm:ueda_linearization}
Assume that the pair $(Y, X)$ is of infinite type and that there exists a positive integer $m$ such that $N_{Y/X}^m$ is holomorphically trivial. 
Then there exists a system $\{(V_j, w_j)\}$ such that $t_{jk}w_k=w_j$ holds on each $V_{jk}$. 
\end{theorem}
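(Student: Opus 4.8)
The plan is to construct, by induction on the type, a sequence of systems of local defining functions of $Y$ whose transition relations are linear to higher and higher order, and then to let these systems converge on a fixed neighborhood of $Y$. Because $(Y,X)$ is of infinite type, every Ueda class vanishes, so at each stage the passage from type $n$ to type $n+1$ is unobstructed; the entire difficulty is analytic, namely to carry out the successive modifications with enough control that the defining functions converge to holomorphic limits.

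First I would fix, as in \S\ref{section:ueda_theory}, a finite covering $\{U_j\}$ of $Y$ with $\{V_j\}$ Stein and simply connected and a system $\{(V_j,w_j)\}$ of type $1$, shrinking the $V_j$'s so that $|w_j|<1$ on each $V_j$. Since $N_{Y/X}$ is unitary flat and $N_{Y/X}^m$ is holomorphically trivial, I may arrange that the constants $t_{jk}$ lie in the group of $m$-th roots of unity, so that $t_{jk}^m=1$ for all $j,k$. Suppose inductively that the system is of type $n$, so that
\[
t_{jk}w_k=w_j+f_{kj,n+1}\,w_j^{n+1}+O(w_j^{n+2})
\]
on each $U_{jk}$. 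The vanishing $u_n(Y,X)=0$ means precisely that the cocycle $\{f_{kj,n+1}\}$ is a coboundary in $H^1(Y,N_{Y/X}^{-n})$: there are holomorphic functions $g_j$ on $U_j$ with $f_{kj,n+1}=t_{jk}^{-n}g_k-g_j$ on $U_{jk}$. A direct computation then shows that replacing $w_j$ by $w_j-g_jw_j^{n+1}$ cancels the order-$(n+1)$ term and yields a system of type $n+1$, changing $w_j$ only by a term of order $w_j^{n+1}$. Iterating, I obtain for every $N$ a system $\{(V_j,w_j^{(N)})\}$ of type $N$ with $w_j^{(N+1)}=w_j^{(N)}-g_j^{(N)}\,(w_j^{(N)})^{N+1}$.

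The crux is to show that $w_j^{(N)}$ converges as $N\to\infty$. After one fixed shrinking so that $|w_j^{(N)}|\le\rho$ for some $\rho<1$ uniformly in $N$, the $N$-th correction has sup-norm at most $\|g_j^{(N)}\|\cdot\rho^{N+1}$, and it is enough to bound the norms $\|g_j^{(N)}\|$ of the coboundary solutions by a quantity growing at most polynomially in $N$. Splitting $\{f_{kj,N+1}\}$ over the fixed finite cover --- by a partition of unity together with the solution of a $\delbar$-equation, or equivalently by a bounded right inverse of the coboundary map for the twisted cocycle equation $c_{jk}=t_{jk}^{-N}g_k-g_j$ --- produces $g_j^{(N)}$ whose norm is controlled by $\|\{f_{kj,N+1}\}\|$ times the norm of the corresponding solution operator.

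This is exactly where the hypothesis that $N_{Y/X}^m$ is holomorphically trivial enters, and I expect the required uniform control of the solution operators to be the main obstacle. Since $t_{jk}^m=1$, the twisting constant $t_{jk}^{-N}$ is periodic in $N$ with period dividing $m$, so up to isomorphism only finitely many twisted bundles $N_{Y/X}^{-N}$, and hence only finitely many cocycle equations, occur; choosing a bounded solution operator for each of them once and for all makes $\|g_j^{(N)}\|$ grow at most geometrically while $\rho^{N+1}$ decays faster. Equivalently, the absence of the small-divisor phenomenon is encoded in
\[
\inf\{\,|t_{jk}^{N}-1| : N\ge 1,\ t_{jk}^{N}\neq 1\,\}>0,
\]
which fails in general for non-torsion unitary flat $N_{Y/X}$ but holds here because the $t_{jk}$ are roots of unity. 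Consequently $\sum_N\|g_j^{(N)}\|\,\rho^{N+1}<\infty$, the $w_j^{(N)}$ converge uniformly on the shrunken neighborhood to holomorphic defining functions $w_j$, and the limit system satisfies $t_{jk}w_k=w_j$ on each $V_{jk}$. Since every modification is carried out over the one fixed finite cover and the estimates are sup-norm estimates, the argument is insensitive to $\dim X$, so the proof goes through verbatim in arbitrary dimension.
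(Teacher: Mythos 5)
Your proposal follows the same skeleton as the argument the paper is invoking: the statement is quoted from Ueda (\cite[Theorem 3]{U}), the paper does not reprove it, and Ueda's proof in \cite[\S 4]{U} is exactly the induction you describe --- vanishing of every Ueda class lets one kill $f_{kj,n+1}$ order by order by solving the twisted coboundary equation, and the torsion hypothesis enters through the fact that only the finitely many bundles $N_{Y/X}^{-n}$, $n$ taken modulo $m$, occur, so the solution operators can be bounded uniformly in $n$. Your preliminary reduction is also sound: since a unitary flat line bundle on a compact connected manifold is holomorphically trivial exactly when its monodromy is trivial (maximum principle applied to $-\log$ of the norm of a trivializing section), triviality of $N_{Y/X}^m$ does force the $t_{jk}$ to be $m$-th roots of unity.

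The gap is in the convergence step, which is the actual content of \cite[\S 4]{U}. Uniform boundedness of the solution operators only yields $\|g_j^{(N)}\|\le K\,\|\{f^{(N)}_{kj,N+1}\}\|$; it does not make $\|g_j^{(N)}\|$ grow ``at most geometrically,'' because nothing yet bounds the cocycle norms themselves. On a fixed shrunken neighborhood $\{|w_j|<\rho\}$, the only a priori control is the Schwarz--Cauchy estimate $\|f^{(N)}_{kj,N+1}\|\le C_N\,\rho^{-(N+1)}$, so your correction terms satisfy only $\|g_j^{(N)}\|\,\rho^{N+1}\le K C_N$ --- bounded, not summable --- and the series $\sum_N\|g_j^{(N)}\|\,\rho^{N+1}$ has no reason to converge. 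Worse, each substitution $w_j^{(N+1)}=w_j^{(N)}-g_j^{(N)}\,(w_j^{(N)})^{N+1}$ perturbs \emph{all} higher coefficients $f_{kj,\nu}$, $\nu>N+1$, through composition and inversion of the coordinate change, so the constants $C_N$ must themselves be propagated through the iteration. Closing the argument requires working on a decreasing sequence of radii $\rho_N\downarrow\rho_\infty>0$ with induction hypotheses controlling $\sup|t_{jk}w_k^{(N)}-w_j^{(N)}|$, i.e.\ the KAM/majorant-series bookkeeping that occupies the body of Ueda's proof; it is there, not in the mere existence of bounded solution operators, that the torsion hypothesis pays off, by eliminating the small-divisor growth of the constants $K(n)$ that forces the Diophantine condition in the non-torsion case. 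As written, your proof does not close.
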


Note that there exist some examples of $(Y, X)$ of infinite type such that $[Y]|_V$ is not unitary flat for any neighborhood $V$ of $Y$ in the case where $N_{Y/X}^m$ is not holomorphically trivial for any integer $m$ (see \cite[\S 5]{U}, \cite{KO}). 

\subsection{Levi-flatness and Ohsawa's Hartogs type extension theorem for bounded domains in K\"ahler manifolds}\label{section:ohsawa_Hartogs_ext}

Let $\Omega$ be a relatively compact domain of $X$. 
Assume that $\Omega$ admits a plurisubharmonic defining function $F$ of class $C^\infty$ defined on a neighborhood of $\overline{\Omega}$: 
i.e. it holds that $\Omega=\{F<0\}$ and that $(dF)_p\not=0$ for each $p\in \del\Omega$. 
In this case, it is clear that $\del\Omega$ is a real submanifold of $X$ of class $C^\infty$ which is locally pseudoconvex. 
For each point $p\in \del\Omega$, denote by $T_{\del\Omega, p}^{1, 0}$ the subspace $\{v\in T_{X, p}\mid \del F(v)=0\}$ of the (holomorphic) tangent space $T_{X, p}$. 
The boundary $\del\Omega$ is said to be {\it Levi-flat} at $p\in\del\Omega$ if all the eigenvalues of the bilinear form on $T_{\del\Omega, p}^{1, 0}$ defined by restricting the complex Hessian of $F$ ({\it Levi form}) are zero. 
When $\del\Omega$ is Levi-flat, which means that $\del\Omega$ is Levi-flat at any point, the vectors in $T_{\del\Omega, p}^{1, 0}$'s define a foliation on $\del\Omega$, which is called {\it Levi foliation}. 

In the proof of Theorem \ref{thm:submain_2}, the following Ohsawa's Hartogs type extension theorem for bounded domains in K\"ahler manifolds plays an essential role: 

\begin{theorem}[{\cite[Theorem 0.1]{O2}}, see also {\cite[Theorem 5.11]{O3}}]\label{thm:ohsawa_Hartogs}
Let $X$ be a K\"ahler manifold and $\Omega\subset X$ be a relatively compact locally pseudoconvex domain such that the boundary $\del\Omega$ is a real hypersurface of class $C^2$. 
Assume that there exists a point of $\del\Omega$ at which $\del\Omega$ is not Levi-flat. 
Then the Hartogs type extension theorem holds on $\Omega$. 
\end{theorem}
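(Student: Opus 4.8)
The plan is to convert the extension statement into an $L^2$-solvability problem for $\delbar$ and to extract the required solvability from the single non-Levi-flat point by means of the Bochner--Kodaira--Nakano machinery on a complete K\"ahler metric. First I would make the reduction precise. Given a neighborhood $U$ of $\del\Omega$ and $f\in\mathcal{O}(U\cap\Omega)$, fix $\chi\in C^\infty(X)$ with $\chi\equiv 1$ on a smaller neighborhood of $\del\Omega$ and $\mathrm{supp}\,\chi\subset U$, and set $g\coloneqq\delbar(\chi f)=(\delbar\chi)f$. This is a $\delbar$-closed $(0,1)$-form with $\mathrm{supp}\,g\Subset\Omega$, and producing the desired extension $F$ with $F=f$ near $\del\Omega$ is equivalent to solving $\delbar u=g$ with $u$ supported in a fixed compact subset of $\Omega$; such a $u$ forces $F\coloneqq \chi f-u$ to be holomorphic on $\Omega$ and to agree with $f$ near the boundary. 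Thus it suffices to show that the class of $g$ vanishes in the compactly supported Dolbeault cohomology $H^{0,1}_c(\Omega)$.

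By $L^2$-Serre duality this vanishing is detected in the conjugate bidegree. Integrating by parts (Stokes, justified on a complete metric) gives, for every $\delbar$-closed $(n,n-1)$-form $\beta$ on $\Omega$, the identity $\int_\Omega g\wedge\beta=\int_{\del\Omega}f\wedge\beta$, and moreover this boundary pairing vanishes whenever $\beta$ is $\delbar$-exact with an $L^2$ primitive, because $g$ is compactly supported with $\delbar g=0$ (so $g\wedge\delbar\gamma=-\delbar(g\wedge\gamma)$ integrates to zero). Hence the whole problem reduces to the $L^2$-vanishing $H^{n,n-1}_{(2)}(\Omega)=0$, that is, to solving $\delbar$ on $(n,n-1)$-forms with $L^2$ estimates.

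To obtain this I would equip $\Omega$ with a complete K\"ahler metric. Local pseudoconvexity together with the $C^2$ boundary yields, after a Diederich--Fornaess type regularization, a bounded plurisubharmonic function $\lambda$ on $\Omega$ with $\lambda\to 0$ at $\del\Omega$, with complex Hessian $\ge 0$ and with normal second derivative blowing up at the boundary; adding $\ddbar(-\log(-\lambda))$ to a fixed K\"ahler form of $X$ then produces a complete K\"ahler metric on $\Omega$. On it the K\"ahler hypothesis gives the clean Bochner--Kodaira--Nakano identity, and for $(n,n-1)$-forms the relevant curvature term is bounded below by the sum of the tangential eigenvalues of the Levi form of $\del\Omega$, which is $\ge 0$ by local pseudoconvexity and is strictly positive near the non-Levi-flat point. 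Running a Donnelly--Fefferman/Berndtsson--Charpentier type $L^2$-estimate, in which the bounded weight $\lambda$ supplies the missing positivity and completeness legitimizes the integration by parts, yields the a priori inequality making $\delbar$ have closed range on $(n,n-1)$-forms and showing that any $L^2$-harmonic $(n,n-1)$-form must vanish on the open set where the Levi form is nondegenerate; unique continuation then forces it to vanish identically. This gives $H^{n,n-1}_{(2)}(\Omega)=0$, hence $[g]=0$ in $H^{0,1}_c(\Omega)$, and the resulting compactly supported $u$ produces $F$.

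The main obstacle is exactly the degeneracy of this estimate: the curvature lower bound vanishes along the entire Levi-flat part of $\del\Omega$ and in the interior, and is genuinely positive only near the one non-Levi-flat point, so one cannot simply invoke a standard $q$-convexity vanishing theorem. The delicate part is to borrow enough positivity from the bounded plurisubharmonic weight $\lambda$ (this is where the K\"ahler assumption and the Donnelly--Fefferman mechanism are essential) and to combine it with unique continuation so that a single strictly pseudoconvex direction at one boundary point suffices to annihilate the $L^2$-harmonic space. A secondary technical point is the merely $C^2$ regularity of $\del\Omega$, which demands care in constructing and smoothing the plurisubharmonic defining function and in justifying the boundary integration by parts.
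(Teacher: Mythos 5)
First, a point of context: the paper does not prove this statement at all --- it is quoted verbatim as Ohsawa's theorem \cite[Theorem 0.1]{O2} (see also \cite[Theorem 5.11]{O3}) and used as a black box in the proof of Theorem \ref{thm:submain_2}. So your proposal can only be measured against Ohsawa's published argument, which does live in the same circle of ideas you invoke: reduction to $\delbar$-solvability with compact support, complete K\"ahler metrics built from Diederich--Fornaess-type bounded plurisubharmonic exhaustions, and twisted $L^2$ estimates. Your opening reduction (solve $\delbar u=(\delbar\chi)f$ with $u$ of compact support, set $F=\chi f-u$) is correct and standard, and you correctly isolate where the difficulty sits.

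The genuine gap is the passage from ``curvature positive near one boundary point'' to ``$\delbar$ has closed range on $(n,n-1)$-forms and $H^{n,n-1}_{(2)}(\Omega)=0$''. The twisted estimate you describe is unavoidably degenerate: on the Levi-flat part of $\del\Omega$ both the Levi form and the tangential part of $\ddbar\lambda$ vanish, so the bounded weight $\lambda$ cannot ``supply the missing positivity'' there --- all it supplies is positivity in the complex normal direction, which contributes nothing to the sum of the $n-1$ smallest eigenvalues that governs the curvature term on $(n,n-1)$-forms. The a priori inequality one actually obtains has the shape $\int_V|u|^2\,dV\leq C\bigl(\|\delbar u\|^2+\|\delbar^*u\|^2\bigr)$, where $V$ is a one-sided neighborhood of the non-Levi-flat point only. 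Such a localized estimate is not coercive: it gives neither closed range nor the exactness statement (every $\delbar$-closed $L^2$ $(n,n-1)$-form has an $L^2$ primitive) that your duality step consumes. What it does give --- vanishing of an $L^2$-harmonic form on $V$, hence everywhere by Aronszajn unique continuation --- is only the vanishing of the harmonic space, i.e.\ of the \emph{reduced} $L^2$ cohomology; if the range of $\delbar$ is not closed, this is strictly weaker, and then knowing that $g$ pairs to zero with exact forms yields nothing. Note also that the standard remedy for localized positivity (H\"ormander's argument, which solves $\delbar$ for data supported in the region where the estimate is non-degenerate) does not apply here, because your datum $g=(\delbar\chi)f$ is supported along the \emph{whole} boundary, not just near the non-Levi-flat point. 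So the proposal asserts precisely the step that constitutes the theorem's actual difficulty.

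Two secondary issues. The duality step needs the pairing of $g$ to vanish against all smooth $\delbar$-closed $(n,n-1)$-forms on $\Omega$ (Serre duality for $H^{0,1}_c$, which itself requires separatedness, again a closed-range statement); you only test against $L^2$ forms of one particular complete metric, with no bridge between the two. And the K\"ahler hypothesis is not there merely to ``legitimize integration by parts'': it enters essentially, via Takeuchi-type estimates as in the work of Ohsawa and Sibony, in producing the Diederich--Fornaess exponent and the bounded plurisubharmonic exhaustion for a locally pseudoconvex domain in a K\"ahler manifold --- a step you take for granted but which fails in general complex manifolds.
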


Note that, when there exists a point $p\in\del\Omega$ at which the complex Hessian of $F$ has at least two positive eigenvalues, 
it clearly holds that the Levi form on $T_{\del\Omega, p}^{1, 0}$ has at least one positive eigenvalue. 
Therefore, $\del\Omega$ is not Levi-flat at the point $p$ in this case. 


\section{Proofs of Theorem \ref{thm:submain_1} and Corollary \ref{cor:main}}

\subsection{Proof of Theorem \ref{thm:submain_1}}

Here we show Theorem \ref{thm:submain_1}. 

\subsubsection{A fundamental fact and the outline of the proof}
In the proof of Theorem \ref{thm:submain_1}, the following fundamental lemma plays an important role. 

\begin{lemma}[{=\cite[Lemma 2.1]{K2020}}]\label{lem_positive_function}
Let $\Omega$ be a neighborhood of the origin in the complex plane $\mathbb{C}$ with the standard coordinate $w$, 
and $\Phi\colon \Omega\to \mathbb{R}$ be a function. 
Assume that $\Phi(w)\geq 0$ for any $w\in \Omega$ and that, for a positive integer $m$, $\Phi$ satisfies 
\[
\Phi(w) = \sum_{p=0}^m c_p\cdot w^p\overline{w}^{n-p} + o(|w|^m)
\]
as $|w|\to 0$, where $c_p$'s are complex constants. 
Then the following holds: \\
$(i)$ When $m$ is odd, $c_p= 0$ for any $p\in \{0, 1, 2, \dots, m\}$. \\
$(ii)$ When $m$ is even, the constant $c_{m/2}$ is a non-negative real number. Any of the other constants are zero if $c_{m/2}=0$. 
\end{lemma}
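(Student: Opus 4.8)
The plan is to pass to polar coordinates and read off the leading homogeneous part of $\Phi$ as a trigonometric polynomial on the unit circle, then exploit the nonnegativity of $\Phi$ through a single integration in the angular variable. Writing $w=re^{i\theta}$ and noting that the homogeneous sum has total degree $m$, the hypothesis becomes
\[
\Phi(re^{i\theta}) = r^m \sum_{p=0}^m c_p\, e^{i(2p-m)\theta} + o(r^m)
\]
as $r\to 0^+$. First I would fix $\theta$ and divide by $r^m$: since $\Phi$ is real-valued and $r^m>0$, the quotient $\Phi(re^{i\theta})/r^m$ is real and nonnegative for small $r$, and letting $r\to 0^+$ along the fixed ray shows that its limit equals
\[
Q(\theta) \coloneqq \sum_{p=0}^m c_p\, e^{i(2p-m)\theta}.
\]
Thus $Q$ is a continuous, real-valued, nonnegative function of $\theta\in\mathbb{R}$; in particular its angular average $\tfrac{1}{2\pi}\int_0^{2\pi} Q(\theta)\,d\theta$ is a nonnegative real number.

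The crucial observation is that $\int_0^{2\pi} e^{i(2p-m)\theta}\,d\theta$ vanishes unless $2p-m=0$, so only the balanced index $p=m/2$ can contribute to this average. When $m$ is odd no index $p$ satisfies $2p=m$, so the average is $0$; being the average of a nonnegative continuous function, this forces $Q\equiv 0$, and by uniqueness of Fourier coefficients (the frequencies $2p-m$ are distinct) every $c_p$ vanishes, which is assertion $(i)$. When $m$ is even the average equals $c_{m/2}$, whence $c_{m/2}$ is a nonnegative real number, giving the first half of $(ii)$; and if $c_{m/2}=0$ the average is again $0$, so $Q\equiv 0$ and all the remaining $c_p$ vanish by the same Fourier argument.

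The only point requiring care, and the closest thing to an obstacle in an otherwise elementary argument, is the passage to the radial limit: one must verify that the error, which is $o(|w|^m)$ as $|w|\to 0$, contributes nothing after division by $r^m$ along each fixed direction $\theta$, and that the displayed sum is genuinely the degree-$m$ homogeneous part, so that no neighbouring-order terms interfere. Both facts follow immediately from the definition of $o(|w|^m)$ evaluated along $w=re^{i\theta}$, after which the entire conclusion reduces to the single remark that a nonnegative continuous function with zero average is identically zero.
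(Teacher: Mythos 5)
Your proof is correct. The paper does not actually prove this lemma itself---it is imported verbatim from \cite[Lemma 2.1]{K2020}---and your argument (restricting to rays $w=re^{i\theta}$ and taking $r\to 0^+$ to extract a nonnegative trigonometric polynomial $Q(\theta)$, then using the angular average to isolate the frequency $2p-m=0$ and Fourier orthogonality to kill the remaining coefficients) is essentially the standard proof given in that reference, so there is nothing further to reconcile; the only cosmetic point is that you correctly read the exponent $\overline{w}^{\,n-p}$ in the statement as the typo it is for $\overline{w}^{\,m-p}$.
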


Let $X$ be a complex manifold of dimension $n$ and $Y$ be a non-singular compact hypersurface of $X$ with unitary flat normal bundle. 
Take an open covering $\{U_j\}$ of $Y$, open subsets $V_j$'s, 
coordinates $z_j=(z_j^1, z_j^2, \dots, z_j^d)$ of $U_j$ ($d\coloneqq n-1$), 
a holomorphic defining function $w_j\colon V_j\to \mathbb{C}$ of $U_j$, 
and constants $t_{jk}\in \U$ as in 
\S \ref{section:ueda_theory}. 
In what follows, 
we fix $\{U_j\}$ and $z_j$'s, 
whereas we change $w_j$'s and shrink $V_j$'s as necessary. 

Assume that $[Y]$ is $C^r$-semipositive ($r\geq 3$). 
Take a function $\psi$ as in Proposition \ref{prop:1}, 
and a plurisubharmonic function $\vp_j$ of class $C^r$ on $V_j$ such that 
$\psi(z_j, w_j) = -\log |w_j|^2 + \vp_j(z_j, w_j)$ holds on $V_j\setminus Y$. 
By Taylor's theorem, one has an expansion 
\[
\vp_j(z_j, w_j) = \sum_{p+q \leq r}\vp_j^{p, q}(z_j)\cdot w_j^p\overline{w_j}^q + \sum_{p+q= r}R_j^{p, q}(z_j, w_j)\cdot w_j^p\overline{w_j}^q
\]
of the function $\vp_j$, where $R_j^{p, q}$'s are functions which approach to $0$ as $|w_j|\to 0$ (in the expansion above, $p$ and $q$ run over the set $\mathbb{Z}_{\geq 0}\coloneqq\{\nu\in\mathbb{Z}\mid\nu\geq 0\}$). 
As is observed in Remark \ref{rmk:vp00}, we may assume that $\vp_j^{0, 0}\equiv 0$ for each $j$. 
Note that $\vp_j^{p, q}=\overline{\vp_j^{q, p}}$ holds, since $\vp_j$ is a real-valued function. 

In order to prove Theorem \ref{thm:submain_1}, we will show the following (Assertion)$_m$ for each $m\in\mathbb{Z}_{\geq 0}$ by induction. 
\begin{description}
\item[(Assertion)$_m$] {\it Assume that $r\geq \max\{3, 2m\}$ holds. 
Then there exists a system $\{(V_j, w_j)\}$ of type $m+1$. 
If the system $\{(V_j, w_j)\}$ is of type $m+1$, then the coefficient function $\vp_j^{p, q}$ is a constant function for $(p, q)=(0, m)$, $(p, q)=(m, 0)$, and for $(p, q)$ such that $0\leq p < m$ and $0\leq q < m$ hold. 
Moreover, if $r\geq 2m+2$ and the system $\{(V_j, w_j)\}$ is of type $m+1$, then $\vp_j^{p, q}$ is a constant function for any $(p, q)$ such that $0\leq p \leq m$ and $0\leq q \leq m$ hold. }
\end{description}

Note that (Assertion)$_0$ clearly holds, since $\vp_j^{0, 0}\equiv 0$. 
In what follows, we assume (Assertion)$_{m-1}$ and show that 
(Assertion)$_m$ holds for each $m\geq 1$. 

In order to show this, we first show that the coefficient function $\vp_j^{m, 0}$ is holomorphic in \S \ref{section:thm12_step1} when the system $\{(V_j, w_j)\}$ is of type $m$. 
Next, in \S \ref{section:thm12_step2}, we compute the expansion of the function $\vp_k-\vp_j$ in two-fold way and compare them to obtain equations on the difference between the functions $t_{jk}^{-p+q}\vp_k^{p, q}$ and $\vp_j^{p, q}$ on each $U_{jk}$. 
Combining these, we modify the defining function $w_j$ to construct a new system of type $m+1$ by using $\vp_j^{m, 0}$ in \S \ref{section:thm12_step3}. 
By using such a system, we complete the proof of (Assertion)$_m$ in \S \ref{section:thm12_step4}. 

In this proof, the following fundamental fact also plays an important role: 
on a compact complex manifold, any global pluriharmonic section of a unitary flat line bundle is locally constant. 
One can easily show this fact by using the maximum principle. 

\subsubsection{The holomorphy of the function $\vp_j^{m, 0}$}\label{section:thm12_step1}

Assume that $r\geq \max\{3, 2m\}$ holds. 
Note that $2m\leq r$ and $m+2\leq r$ hold, since $m\geq 1$. 

Take a system $\{(V_j, w_j)\}$ of type $m$. Let 
\[
t_{jk}w_k=w_j+\sum_{\nu=m+1}f_{kj, \nu}(z_j)\cdot w_j^\nu
\]
be the expansion on $V_{jk}$. 
As the function $(\vp_j)_{z_j^\lambda}\coloneqq \frac{\del}{\del z_j^\lambda}\vp_j$ is of class $C^{r-1}$, one has the expansion 
\[
(\vp_j)_{z_j^\lambda}(z_j, w_j) = 
\sum_{p+q \leq r-2}\vp_{\lambda, j}^{p, q}(z_j)\cdot w_j^p\overline{w_j}^q
  + \sum_{p+q= r-2}R_{\lambda, j}^{p, q}(z_j, w_j)\cdot w_j^p\overline{w_j}^q
\]
by Taylor's theorem, where 
$R_{\lambda, j}^{p, q}$'s are functions which approach to $0$ as $|w_j|\to 0$. 
Again by using the condition that $\vp_j$ is of class $C^{r}$ (and thus it is of class $C^{r-1}$ especially), one has that the function
\[
\vp_{\lambda, j}^{p, q} = \frac{1}{p!q!}\frac{\del^{p+q+1}}{\del w_j^p\del \overline{w_j}^q\del z_j^\lambda}\vp_j
\]
coincides with the partial derivative of the function 
\[
\vp_j^{p, q} = \frac{1}{p!q!}\frac{\del^{p+q}}{\del w_j^p\del \overline{w_j}^q}\vp_j
\]
with respect to the variable $z_j^\lambda$ for each $(p, q)\in (\mathbb{Z}_{\geq 0})^2$ with $p+q\leq r-2$. 
Therefore, 
\[
(\vp_j)_{z_j^\lambda}(z_j, w_j) = 
\sum_{p+q\leq r-2}(\vp_j^{p, q})_{z_j^\lambda}(z_j)\cdot w_j^p\overline{w_j}^q
  + \sum_{p+q= r-2}R_{\lambda, j}^{p, q}(z_j, w_j)\cdot w_j^p\overline{w_j}^q
\]
holds. 
As we are assuming (Assertion)$_{m-1}$, 
the function $\vp_j^{p, q}$ is constant if $p \leq m-1$ and $q \leq m-1$. 
Thus we have 
\begin{align*}
(\vp_j)_{z_j^\lambda}(z_j, w_j) =
&  (\vp_j^{m, 0})_{z_j^\lambda}(z_j)\cdot w_j^m + (\vp_j^{0, m})_{z_j^\lambda}(z_j)\cdot \overline{w_j}^m\\
&+\sum_{m+1\leq p+q\leq r-2}(\vp_j^{p, q})_{z_j^\lambda}(z_j)\cdot w_j^p\overline{w_j}^q
  + \sum_{p+q= r-2}R_{\lambda, j}^{p, q}(z_j, w_j)\cdot w_j^p\overline{w_j}^q, 
\end{align*}
from which it follows that 
\begin{equation}\label{eq:vp_zw}
(\vp_j)_{z_j^\lambda\overline{w_j}}(z_j, w_j) = 
m(\vp_j^{0, m})_{z_j^\lambda}(z_j)\cdot \overline{w_j}^{m-1}
  + o(|w_j|^{m-1})
\end{equation}
holds as $|w_j|\to 0$. 
Similarly, one has that 
\[
(\vp_j)_{z_j^\lambda \overline{z_j^\mu}}(z_j, w_j) = 
\begin{cases}
\sum_{p+q =1}(\vp_j^{p, q})_{z_j^\lambda\overline{z_j^\mu}}(z_j)\cdot w_j^p\overline{w_j}^q
 + o(|w_j|) & \text{if}\ m=1\\
\sum_{p+q \leq 2m-2}(\vp_j^{p, q})_{z_j^\lambda\overline{z_j^\mu}}(z_j)\cdot w_j^p\overline{w_j}^q
 + o(|w_j|^{2m-2}) & \text{if}\ m\geq 2
\end{cases}
\]
holds for $\lambda, \mu=1, 2, \dots, d$. 
Furthermore, it follows from the following Lemma \ref{lem_ph_inductively} that
\begin{equation}\label{eq:vp_zz}
(\vp_j)_{z_j^\lambda \overline{z_j^\mu}}(z_j, w_j) = 
\begin{cases}
o(|w_j|) & \text{if}\ m=1\\
o(|w_j|^{2m-2}) & \text{if}\ m\geq 2
\end{cases}
\end{equation}
holds. 

\begin{lemma}\label{lem_ph_inductively}
Let $M$ be a non-negative integer which is less than or equal to $\max\{1, 2m-2\}$. 
Then the function $\vp_j^{p, q}$ is pluriharmonic for any $(p, q)\in (\mathbb{Z}_{\geq 0})^2$ with $p+q=M$. 
\end{lemma}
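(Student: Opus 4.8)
The plan is to prove the claim by induction on $M$, extracting pluriharmonicity from the positive semidefiniteness of the complex Hessian of the plurisubharmonic function $\vp_j$ through Lemma \ref{lem_positive_function}. I fix indices $\lambda,\mu\in\{1,\dots,d\}$ and a point $z_0\in U_j$, and abbreviate $\Phi^{\lambda\mu}(w):=(\vp_j)_{z_j^\lambda\overline{z_j^\mu}}(z_0,w)$, regarded as a function of $w=w_j$ on the fibre over $z_0$. Since $\vp_j$ is plurisubharmonic, the Hermitian matrix $(\Phi^{\lambda\mu}(w))_{\lambda,\mu}$ is positive semidefinite at every point; in particular each diagonal entry satisfies $\Phi^{\lambda\lambda}\geq 0$, and each $2\times 2$ principal submatrix is positive semidefinite, which yields $|\Phi^{\lambda\mu}(w)|^2\leq \Phi^{\lambda\lambda}(w)\,\Phi^{\mu\mu}(w)$. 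These two facts are the only analytic inputs I will need.

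First I would set up the induction: assume $\vp_j^{p,q}$ is pluriharmonic for every $(p,q)$ with $p+q<M$, the base case $M=0$ being immediate from $\vp_j^{0,0}\equiv 0$. The inductive hypothesis means $(\vp_j^{p,q})_{z_j^\lambda\overline{z_j^\mu}}\equiv 0$ for $p+q<M$, so, feeding this into the Taylor expansion of $(\vp_j)_{z_j^\lambda\overline{z_j^\mu}}$ recorded just above and discarding the terms of degree exceeding $M$ into the remainder, I obtain
\[
\Phi^{\lambda\lambda}(w)=\sum_{p+q=M}(\vp_j^{p,q})_{z_j^\lambda\overline{z_j^\lambda}}(z_0)\cdot w^p\overline{w}^q+o(|w|^M)
\]
as $|w|\to 0$. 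Because $\Phi^{\lambda\lambda}\geq 0$, Lemma \ref{lem_positive_function} applies with degree parameter $M$. When $M$ is odd, part $(i)$ forces $(\vp_j^{p,q})_{z_j^\lambda\overline{z_j^\lambda}}(z_0)=0$ for every $(p,q)$ with $p+q=M$, and the diagonal is settled.

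The hard part is the even case, and it is exactly here that the hypothesis $M\leq\max\{1,2m-2\}$ is used. Part $(ii)$ of Lemma \ref{lem_positive_function} only tells us that the middle coefficient $(\vp_j^{M/2,M/2})_{z_j^\lambda\overline{z_j^\lambda}}(z_0)$ is a nonnegative real number, so I need an independent reason for it to vanish. For $m\geq 2$ the bound $M\leq 2m-2$ gives $M/2\leq m-1$, so both indices of $\vp_j^{M/2,M/2}$ are at most $m-1$; the outer inductive hypothesis (Assertion)$_{m-1}$ then says $\vp_j^{M/2,M/2}$ is constant, whence $(\vp_j^{M/2,M/2})_{z_j^\lambda\overline{z_j^\lambda}}\equiv 0$. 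With this middle coefficient equal to zero, the remaining assertion of part $(ii)$ forces all degree-$M$ diagonal coefficients to vanish as well. In either parity I conclude $\Phi^{\lambda\lambda}(w)=o(|w|^M)$ for every $\lambda$; substituting into the Cauchy--Schwarz inequality gives $\Phi^{\lambda\mu}(w)=o(|w|^M)$ for all $\lambda,\mu$, so the degree-$M$ coefficients $(\vp_j^{p,q})_{z_j^\lambda\overline{z_j^\mu}}(z_0)$ vanish for all $\lambda,\mu$. Since $z_0$ is arbitrary, $\vp_j^{p,q}$ is pluriharmonic for each $(p,q)$ with $p+q=M$, closing the induction.

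I expect the even case to be the only genuine obstacle: Lemma \ref{lem_positive_function} never annihilates the diagonal coefficient $c_{M/2}$ by itself, so the argument depends entirely on the numerical matching between the degree range $p+q\leq 2m-2$ in which pluriharmonicity is asserted and the constancy range $p,q\leq m-1$ provided by (Assertion)$_{m-1}$, bridged by the inequality $M/2\leq m-1$. By contrast, the passage from the diagonal to the off-diagonal entries is a soft consequence of positive semidefiniteness, and the case $m=1$ involves only $M\in\{0,1\}$, so the even subcase with $M\geq 2$ never arises there.
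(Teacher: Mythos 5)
Your proof is correct and is essentially the paper's own argument: the same induction on $M$, the same truncated expansion of $(\vp_j)_{z_j^\lambda\overline{z_j^\mu}}$ obtained from the inductive hypothesis, the same application of Lemma \ref{lem_positive_function} with the odd/even dichotomy, and the same appeal to (Assertion)$_{m-1}$ (via $M/2\leq m-1$, with the $m=1$ even case reducing to the base case) to kill the middle coefficient. The only divergence is the final linear-algebra step: the paper contracts the complex Hessian with an arbitrary fixed vector $\xi\in\mathbb{C}^d$, applies Lemma \ref{lem_positive_function} to the resulting non-negative scalar function, and then passes to ${\rm Re}\,\vp_j^{p,q}$ and ${\rm Im}\,\vp_j^{p,q}$ to conclude pluriharmonicity, whereas you apply the lemma only to the $d$ diagonal entries $\Phi^{\lambda\lambda}$ and recover the off-diagonal vanishing from the Cauchy--Schwarz inequality $|\Phi^{\lambda\mu}|^2\leq\Phi^{\lambda\lambda}\Phi^{\mu\mu}$ furnished by positive semi-definiteness. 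Both maneuvers are sound and of comparable length; yours trades the paper's polarization-plus-real/imaginary-parts argument for a growth estimate on the off-diagonal entries.
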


\begin{proof}
Lemma clearly holds for $M=0$, since $\vp_j^{0, 0}\equiv 0$. 
Assume that the assertion holds for $M-1$. 
Then, for any element $\xi=(\xi^1, \xi^2, \dots, \xi^d)\in \mathbb{C}^d$, one has that
\[
\sum_{\lambda=1}^d\sum_{\mu=1}^d(\vp_j)_{z_j^\lambda \overline{z_j^\mu}}\cdot \xi^\lambda\cdot \overline{\xi^\mu} = 
\sum_{p+q = M}\left(\sum_{\lambda=1}^d\sum_{\mu=1}^d (\vp_j^{p, q})_{z_j^\lambda\overline{z_j^\mu}}\cdot \xi^\lambda\cdot \overline{\xi^\mu} \right)\cdot w_j^p\overline{w_j}^q
 + o(|w_j|^{M})
\]
holds. 
As $\vp_j$ is plurisubharmonic, this function is non-negative. 

When $M$ is odd, it follows from Lemma \ref{lem_positive_function} that 
\[
\sum_{\lambda=1}^d\sum_{\mu=1}^d (\vp_j^{p, q})_{z_j^\lambda\overline{z_j^\mu}}\cdot \xi^\lambda\cdot \overline{\xi^\mu}\equiv 0
\]
for $(p, q)$ with $p+q=M$. 
It easily follows from this that 
$2{\rm Re}\,\vp_j^{p, q} (= \vp_j^{p, q}+\vp_j^{q, p})$ and 
$2{\rm Im}\,\vp_j^{p, q} (= -\sqrt{-1}(\vp_j^{p, q}-\vp_j^{q, p}))$ 
satisfy the equations 
$\sum_{\lambda, \mu=1}^d ({\rm Re}\,\vp_j^{p, q})_{z_j^\lambda\overline{z_j^\mu}}\cdot \xi^\lambda\cdot \overline{\xi^\mu}\equiv 0$ and 
$\sum_{\lambda, \mu=1}^d ({\rm Im}\,\vp_j^{p, q})_{z_j^\lambda\overline{z_j^\mu}}\cdot \xi^\lambda\cdot \overline{\xi^\mu}\equiv 0$, 
which implies that both ${\rm Re}\,\vp_j^{p, q}$ and ${\rm Im}\,\vp_j^{p, q}$ are pluriharmonic. 
Therefore one has that 
$\vp_j^{p, q}$ is also pluriharmonic for $(p, q)$ with $p+q=M$. 

When $M$ is even, set $M=2L$. 
As $L\leq m-1$, (Assertion)$_{m-1}$ implies that $\vp_j^{L, L}$ is a constant function. 
Therefore the assertion follows from Lemma \ref{lem_positive_function} by using the same argument as in the case where $M$ is odd. 
\end{proof}

For $\lambda=1, 2, \dots, d$, 
denote by $H_{j, \lambda}(z_j, w_j)$ the Hermitian form on 
the space spanned by $\del/\del w_j$ and $\del/\del z_j^\lambda$ 
defined by restricting the complex Hessian of $\vp_j$. 
Then, by the equations (\ref{eq:vp_zw}) and (\ref{eq:vp_zz}), one has that 
\begin{align*}
{\rm det}\,H_{j, \lambda}
&= (\vp_j)_{w_j\overline{w_j}}\cdot (\vp_j)_{z_j^\lambda\overline{z_j^\lambda}}
-(\vp_j)_{z_j^\lambda\overline{w_j}}\cdot (\vp_j)_{w_j\overline{z_j^\lambda}}\\
&=-m^2\left|(\vp_j^{m, 0})_{\overline{z_j^\lambda}}\right|^2\cdot |w_j|^{2m-2}+o(|w_j|^{2m-2}). 
\end{align*}
As $\vp_j$ is plurisubharmonic, ${\rm det}\,H_{j, \lambda}$ is non-negative. 
Therefore one has that $(\vp_j^{m, 0})_{\overline{z_j^\lambda}}\equiv 0$ for each $\lambda$, which implies that the function $\vp_j^{m, 0}$ is holomorphic. 

\subsubsection{Expansions of $\vp_k-\vp_j$ and the gluing of $\vp_j^{p, q}$'s}\label{section:thm12_step2}

Here let us consider the expansion of the function $\vp_k(z_k, w_k)-\vp_j(z_j, w_j)$ in two-fold way. 
First, by using a holomorphic function $F_{kj, m+2}$ defined by $t_{jk}w_k=w_j+f_{kj, m+1}(z_j)\cdot w_j^{m+1}+w_j\cdot F_{kj, m+2}(z_j, w_j)$, one has 
\begin{align}\label{eq:diff_vp's_1}
\vp_k(z_k, w_k)-\vp_j(z_j, w_j) 
&=(\psi+\log|w_k|^2)-(\psi+\log|w_j|^2) \\
&=\log|w_k|^2-\log|w_j|^2 \nonumber \\
&=\log\left|1+f_{kj, m+1}(z_j)\cdot w_j^{m} + F_{kj, m+2}(z_j, w_j)\right|^2 \nonumber \\
&= f_{kj, m+1}\cdot w_j^m+\overline{f_{kj, m+1}}\cdot \overline{w_j}^m \nonumber \\
&\ \ \ \ - \frac{f_{kj, m+1}^2}{2}\cdot w_j^{2m} -\frac{\overline{f_{kj, m+1}^2}}{2}\cdot \overline{w_j}^{2m} \nonumber \\
&\ \ \ \ + F_{kj, m+2}+\overline{F_{kj, m+2}} +O(|w_j|^{2m+1}). \nonumber 
\end{align}

Let $G_{kj, m+2}^{(s)}$ be the holomorphic function defined by 
\[
(w_j + f_{kj, m+1}(z_j)\cdot w_j^{m+1} + \cdots)^s = w_j^s\cdot \left(1 + sf_{kj, m+1}(z_j)\cdot w_j^m + G_{kj, m+2}^{(s)}(z_j, w_j)\right)
\]
for each integer $s$. 
Then, as it holds that
\begin{align*}
&\vp_k^{p, q}(z_k)\cdot w_k^p\overline{w_k}^q\\
&=t_{jk}^{-p+q}\vp_k^{p, q}(z_k)\cdot (t_{jk}w_k)^p\overline{(t_{jk}w_k)}^q\\
&=t_{jk}^{-p+q}\vp_k^{p, q}(z_k)\cdot w_j^p\overline{w_j}^q\cdot (1 + pf_{kj, m+1}\cdot w_j^m + G_{kj, m+2}^{(p)})\cdot(1 + q\overline{f_{kj, m+1}(z_j)}\cdot \overline{w_j}^m + \overline{G_{kj, m+2}^{(q)}}),
\end{align*}
one has the second expansion 
\begin{align}\label{eq:diff_vp's_2}
&\vp_k(z_k, w_k)-\vp_j(z_j, w_j)\\
&= \sum_{p+q\leq 2m}\left(\vp_k^{p, q}(z_k)\cdot w_k^p\overline{w_k}^q-\vp_j^{p, q}(z_j)\cdot w_j^p\overline{w_j}^q\right)
+o(|w_j|^{2m}) \nonumber \\
&= \sum_{p+q\leq 2m}(t_{jk}^{-p+q}\vp_k^{p, q}(z_k)-\vp_j^{p, q}(z_j))\cdot w_j^p\overline{w_j}^q \nonumber  \\
&\ \ \ \ +\sum_{0<p+q\leq 2m}t_{jk}^{-p+q}\vp_k^{p, q}(z_k)\cdot \left(pf_{kj, m+1}\cdot w_j^{m+p}\overline{w_j}^q + q\overline{f_{kj, m+1}}\cdot w_j^p\overline{w_j}^{m+q}\right.  \nonumber \\
&\ \ \ \ \ \ \ \ \ \ \ \ \ \ \ \ \ \ \ \ \left.+ pq\cdot |f_{kj, m+1}|^2\cdot w_j^{m+p}\overline{w_j}^{m+q} + G_{kj, m+2}^{(p)}\cdot w_j^p\overline{w_j}^q+\overline{ G_{kj, m+2}^{(q)}}\cdot w_j^p\overline{w_j}^q\right) \nonumber \\
&\ \ \ \ +o(|w_j|^{2m}). \nonumber 
\end{align}

Note that $F_{kj, m+2}(z_j, w_j) = O(w_j^{m+1})$, 
$G_{kj, m+2}^{(s)}(z_j, w_j) = O(w_j^{m+1})$, and 
that $\vp_j^{p, q}$ is constant for each $(p, q)$ with $p \leq m-1$ and $q \leq m-1$ (by (Assertion)$_{m-1}$). 
Then it follows from these two expansions (\ref{eq:diff_vp's_1}) and (\ref{eq:diff_vp's_2}) that 
$t_{jk}^{-p+q}\vp_k^{p, q}-\vp_j^{p, q}\equiv 0$ holds for any $(p, q)\in (\mathbb{Z}_{\geq 0})^2$ with $p+q<m$ by the induction on $p+q$. 
Therefore, again by comparing two expansions (\ref{eq:diff_vp's_1}) and (\ref{eq:diff_vp's_2}), one has that 
\[
f_{kj, m+1}\cdot w_j^m+\overline{f_{kj, m+1}}\cdot \overline{w_j}^m
=\sum_{p+q= m}(t_{jk}^{-p+q}\vp_k^{p, q}(z_k)-\vp_j^{p, q}(z_j))\cdot w_j^p\overline{w_j}^q +o(|w_j|^{m}), 
\]
from which it follows that 
\begin{equation}\label{eq:diff_vp's_3}
t_{jk}^{-p+q}\vp_k^{p, q}-\vp_j^{p, q}=
\begin{cases}
\overline{f_{kj, m+1}} & \text{if}\ (p, q)=(0, m) \\
0 & \text{if}\ (p, q)=(1, m-1), (2, m-2), \dots, (m-1, 1) \\
f_{kj, m+1} & \text{if}\ (p, q)=(m, 0) 
\end{cases}
\end{equation}
holds on each $U_{jk}$. 

\subsubsection{Existence of a system of type $m+1$}\label{section:thm12_step3}

Set $\widehat{w}_j\coloneqq w_j-\vp_j^{m, 0}(z_j)\cdot w_j^{m+1}$. 
By shrinking $V_j$ if necessary, this function is also a defining function of $U_j$. 
As 
\begin{align*}
t_{jk}\widehat{w}_k
&= t_{jk}w_k-t_{jk}\vp_k^{m, 0}(z_k)\cdot w_k^{m+1} \\
&= (w_j + f_{kj, m+1}(z_j)\cdot w_j^{m+1} + O(w_j^{m+2})) - (t_{jk}^{-m}\vp_k^{m, 0}(z_k)\cdot w_j^{m+1} +O(w_j^{m+2}))\\
&= w_j - \vp_j^{m, 0}(z_j)\cdot w_j^{m+1} + O(w_j^{m+2}) 
= \widehat{w}_j + O(\widehat{w}_j^{m+2}), 
\end{align*}
one has that $\{(V_j, \widehat{w}_j)\}$ is a system of type $m+1$. 

In what follows, we replace $w_j$ with $\widehat{w}_j$ and assume that $\{(V_j, w_j)\}$ is a system of type $m+1$. 
Then it holds that $f_{kj, m+1}\equiv 0$. 
Thus it follows form the equation (\ref{eq:diff_vp's_3}) that 
$\{(U_j, \vp_j^{p, q}(z_j))\}$ glues up to define a global section of $N_{Y/X}^{-p+q}$ 
for each $(p, q)\in (\mathbb{Z}_{\geq 0})^2$ with $p+q= m$. 
As $m\leq \max\{1, 2m-2\}$, Lemma \ref{lem_ph_inductively} implies that 
$\{(U_j, \vp_j^{p, q}(z_j))\}$ is a plurisubharmonic global section of a unitary flat bundle $N_{Y/X}^{-p+q}$ for such a pair $(p, q)$, 
which means that $\vp_j^{p, q}$ is constant if $p+q= m$. 
Especially, both $\vp_j^{0, m}$ and $\vp_j^{m, 0}$ are constant functions. 

\subsubsection{End of the proof}\label{section:thm12_step4}
As in the previous subsection, we  assume that $\{(V_j, w_j)\}$ is a system of type $m+1$. 
Additionally, assume that $2m+2\leq r$. 
Then, as $f_{kj, m+1}\equiv 0$ and 
$\{(U_j, \vp_j^{p, q}(z_j))\}$ defines a locally constant global section of $N_{Y/X}^{-p+q}$ for any $(p, q)\in (\mathbb{Z}_{\geq 0})^2$ with $p+q \leq m$, 
it follows from the equations (\ref{eq:diff_vp's_1}) and (\ref{eq:diff_vp's_2}) that 
\begin{align}\label{eq:diff_vp's_4}
&F_{kj, m+2}+\overline{F_{kj, m+2}} \\
&= \sum_{m+1\leq p+q\leq 2m}(t_{jk}^{-p+q}\vp_k^{p, q}(z_k)-\vp_j^{p, q}(z_j))\cdot w_j^p\overline{w_j}^q \nonumber  \\
&\ \ \ \ +\sum_{0<p+q\leq 2m}t_{jk}^{-p+q}\vp_k^{p, q}(z_k)\cdot ( G_{kj, m+2}^{(p)}\cdot w_j^p\overline{w_j}^q+\overline{ G_{kj, m+2}^{(q)}}\cdot w_j^p\overline{w_j}^q) \nonumber \\
&\ \ \ \ +o(|w_j|^{2m}). \nonumber 
\end{align}
By using this, we have the following: 

\begin{lemma}\label{lem:12main}
For each $\mu=0, 1, 2, \dots, m$, $\{(U_j, \vp_j^{m, \mu})\}$ and $\{(U_j, \vp_j^{\mu, m})\}$ define locally constant global sections of $N_{Y/X}^{-m+\mu}$ and $N_{Y/X}^{m-\mu}$, respectively. 
\end{lemma}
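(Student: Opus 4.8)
The plan is to extract the conclusion from the two expansions already recorded, reading the coefficient identity (\ref{eq:diff_vp's_4}) monomial by monomial. First I would establish that each $\{(U_j,\vp_j^{m,\mu})\}$ glues to a global section of $N_{Y/X}^{-m+\mu}$, and then show each such section is locally constant by a maximum-principle argument. Since $\vp_j^{\mu,m}=\overline{\vp_j^{m,\mu}}$ and the conjugate of a section of the unitary flat bundle $N_{Y/X}^{-m+\mu}$ is a section of $N_{Y/X}^{m-\mu}$, it suffices to treat $\vp_j^{m,\mu}$ for $\mu=0,1,\dots,m$; the case $\mu=0$ is already settled in \S\ref{section:thm12_step3}.

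For the gluing I would exploit that the system is now of type $m+1$, so $f_{kj,m+1}\equiv 0$ and hence $F_{kj,m+2}$ and every $G_{kj,m+2}^{(s)}$ are \emph{holomorphic} functions of order $O(w_j^{m+1})$ in $w_j$. Comparing, for $1\leq \mu\leq m$, the coefficient of the mixed monomial $w_j^m\overline{w_j}^\mu$ on both sides of (\ref{eq:diff_vp's_4}): the left-hand side $F_{kj,m+2}+\overline{F_{kj,m+2}}$ contributes nothing, since it contains only pure powers $w_j^p$ and $\overline{w_j}^q$; and each $G$-term contributes only to monomials whose $w_j$-degree or $\overline{w_j}$-degree is at least $m+1$, and so cannot reach $w_j^m\overline{w_j}^\mu$ (whose degrees are $m$ and $\mu\leq m$). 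Thus the coefficient of $w_j^m\overline{w_j}^\mu$ forces $t_{jk}^{-m+\mu}\vp_k^{m,\mu}-\vp_j^{m,\mu}\equiv 0$ on each $U_{jk}$, which is exactly the assertion that $\{(U_j,\vp_j^{m,\mu})\}$ is a global section of $N_{Y/X}^{-m+\mu}$ (the case $\mu=m$ giving a global function on $Y$, as $N_{Y/X}^0$ is trivial).

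To prove local constancy I would refine Lemma \ref{lem_ph_inductively}. Since $r\geq 2m+2$, the function $(\vp_j)_{z_j^\lambda\overline{z_j^\nu}}$ is of class $C^{2m}$, and as every $\vp_j^{p,q}$ with $p+q\leq m$ is now constant, its Taylor expansion is $\sum_{m+1\leq p+q\leq 2m}(\vp_j^{p,q})_{z_j^\lambda\overline{z_j^\nu}}\,w_j^p\overline{w_j}^q+o(|w_j|^{2m})$. Applying Lemma \ref{lem_positive_function} to the nonnegative function $\sum_{\lambda,\nu}(\vp_j)_{z_j^\lambda\overline{z_j^\nu}}\xi^\lambda\overline{\xi^\nu}$ order by order from degree $m+1$ up to $2m$, exactly as in Lemma \ref{lem_ph_inductively}, I would deduce that $\vp_j^{p,q}$ is pluriharmonic for all $(p,q)$ with $m+1\leq p+q\leq 2m$ except $(m,m)$, while at $(m,m)$ Lemma \ref{lem_positive_function}$(ii)$ only yields that the middle coefficient is nonnegative, i.e. that $\vp_j^{m,m}$ is plurisubharmonic. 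Combining: for $0\leq \mu\leq m-1$ the section $\vp_j^{m,\mu}$ is a pluriharmonic global section of the unitary flat bundle $N_{Y/X}^{-m+\mu}$, hence locally constant by the fundamental fact recalled after the outline; and for $\mu=m$ the section is a global plurisubharmonic function on the compact manifold $Y$, hence constant by the maximum principle.

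The main obstacle is precisely this top diagonal term $(m,m)$. Unlike in Lemma \ref{lem_ph_inductively}, where every relevant diagonal coefficient $(L,L)$ satisfied $L\leq m-1$ and was therefore already known to be constant (so that Lemma \ref{lem_positive_function}$(ii)$ killed all coefficients at that order), here the surviving degree-$2m$ middle coefficient cannot be annihilated by positivity alone, as Lemma \ref{lem_positive_function}$(ii)$ genuinely permits it to be a nonzero nonnegative real. One must therefore change tack and invoke global compactness of $Y$ together with the triviality of $N_{Y/X}^0$ to force constancy there. A secondary point requiring care is that the hypothesis $r\geq 2m+2$ is exactly what is needed to run the Hessian expansion all the way up to degree $2m$.
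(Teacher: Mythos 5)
There is a genuine gap in your gluing step. In the first sum of (\ref{eq:diff_vp's_4}) the factor $\vp_k^{p,q}(z_k)$ is \emph{not} a function of $z_j$ alone: on $V_{jk}$ one has $z_k=z_k(z_j,w_j)$, and this genuinely depends on $w_j$ because the projections ${\rm Pr}_{U_j}$ and ${\rm Pr}_{U_k}$ need not be compatible. Hence that sum is not an expansion in monomials $w_j^p\overline{w_j}^q$ with $z_j$-dependent coefficients, and ``the coefficient of $w_j^m\overline{w_j}^\mu$'' is not simply $t_{jk}^{-m+\mu}\vp_k^{m,\mu}-\vp_j^{m,\mu}$. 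To read off coefficients one must first expand each factor,
\[
\vp_k^{p,q}(z_k(z_j,w_j)) \;=\; \vp_k^{p,q}(z_k(z_j,0)) \;+\; \sum_{0<\nu+\lambda} H^{p,q}_{kj,\nu\lambda}(z_j)\, w_j^\nu\overline{w_j}^\lambda \;+\;\cdots,
\]
and then the monomial $w_j^m\overline{w_j}^\mu$ also receives the cross contributions $t_{jk}^{-p+q}H^{p,q}_{kj,\nu\lambda}$ with $p+\nu=m$, $q+\lambda=\mu$, $\nu+\lambda>0$. Such a contribution vanishes only when the corresponding $\vp_k^{p,q}$ is constant. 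For pairs with $p\leq m-1$ and $q\leq m-1$ this is (Assertion)$_{m-1}$, but pairs $(m,q)$ with $q<\mu$ (and $(p,m)$ with $p<m$ when $\mu=m$) also occur, and their constancy is exactly the statement of the lemma for smaller $\mu$. This is precisely why the paper proves the lemma by induction on $\mu$: at step $\mu$ the inductive hypothesis kills these cross terms, and only then does the comparison yield $t_{jk}^{-m+\mu}\vp_k^{m,\mu}(z_k(z_j,0))-\vp_j^{m,\mu}(z_j)\equiv 0$ on $U_{jk}$. Your proposal contains no induction on $\mu$ and no mechanism for disposing of these terms, so the identity you assert does not follow from the comparison as you set it up. (Your treatment of the $F$- and $G$-terms, by contrast, is correct: they indeed cannot hit a mixed monomial $w_j^m\overline{w_j}^\mu$ with $1\leq\mu\leq m$.)

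Your second step (local constancy) essentially coincides with the paper's argument and is sound: for $\mu<m$ the relevant degree is $m+\mu\leq 2m-1$, where order-by-order application of Lemma \ref{lem_positive_function} (the even-order middle coefficients $\vp_j^{L,L}$, $L\leq m-1$, being constant by (Assertion)$_{m-1}$) gives pluriharmonicity, hence local constancy of a pluriharmonic section of a unitary flat bundle on compact $Y$; and for $\mu=m$ one only gets plurisubharmonicity of the global function $\vp_j^{m,m}$ and concludes by the maximum principle. One minor overreach there: you also claim pluriharmonicity of $\vp_j^{p,q}$ for $p+q=2m$, $p\neq q$, but Lemma \ref{lem_positive_function} $(ii)$ gives no information on the off-middle coefficients when the middle one is strictly positive; this claim is not needed for the lemma, so it does no harm, but it is unjustified.
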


Before giving a proof of Lemma \ref{lem:12main}, we note some implications of the arguments above. 
From Lemma \ref{lem_ph_inductively} and the arguments we made before Lemma \ref{lem_ph_inductively}, 
it follows that 
\begin{align*}
& \sum_{\lambda, \mu=1}^d(\vp_j)_{z_j^\lambda \overline{z_j^\mu}}\cdot \xi^\lambda\cdot \overline{\xi^\mu}  \\
&= \sum_{p+q = 2m-1}\left(\sum_{\lambda, \mu=1}^d(\vp_j^{p, q})_{z_j^\lambda\overline{z_j^\mu}}\cdot \xi^\lambda\cdot \overline{\xi^\mu} \right)\cdot w_j^p\overline{w_j}^q\\
&\ \ \ \ + \sum_{p+q = 2m}\left(\sum_{\lambda, \mu=1}^d(\vp_j^{p, q})_{z_j^\lambda\overline{z_j^\mu}}\cdot \xi^\lambda\cdot \overline{\xi^\mu} \right)\cdot w_j^p\overline{w_j}^q\\
&\ \ \ \  + o(|w_j|^{2m})
\end{align*}
holds for any $\xi=(\xi^1, \xi^2, \dots, \xi^d)\in\mathbb{C}^d$. 
Thus, by running the arguments as in the proof of Lemma \ref{lem_ph_inductively}, one has that $\vp_j^{p, q}$ is pluriharmonic for any $(p, q)$ with $p+q=2m-1$. 
Therefore it follows that 
\begin{equation}\label{eq:thm12_lasteq}
 \sum_{\lambda, \mu=1}^d(\vp_j)_{z_j^\lambda \overline{z_j^\mu}}\cdot \xi^\lambda\cdot \overline{\xi^\mu}  = \sum_{p+q = 2m}\left(\sum_{\lambda, \mu=1}^d(\vp_j^{p, q})_{z_j^\lambda\overline{z_j^\mu}}\cdot \xi^\lambda\cdot \overline{\xi^\mu} \right)\cdot w_j^p\overline{w_j}^q + o(|w_j|^{2m}). 
\end{equation}

\begin{proof}[Proof of {Lemma \ref{lem:12main}}]
Let us show the lemma by induction. 
Note that the assertion has already been shown when $\mu=0$ in the previous subsection. 
Assume that the assertion holds for $(p, q)=(0, m), (1, m), \dots, (\mu-1, m)$, and $(p, q)=(m, 0), (m, 1), \dots, (m, \mu-1)$. 
For a pair $(p, q)$ with $m+1\leq p+q\leq 2m$, let 
\begin{align*}
\vp_k^{p, q}(z_k) &= \vp_k^{p, q}(z_k(z_j, w_j))\\ 
&= \vp_k^{p, q}(z_k(z_j, 0)) + \sum_{0<\nu+\lambda\leq 2m+2-(p+q)} H_{kj, \nu\lambda}^{p, q}(z_j)\cdot w_j^\nu\overline{w_k}^\lambda + o\left(|w_j|^{2m+2-(p+q)}\right). 
\end{align*}
be the expansion of the function $\vp_k^{p, q}(z_k(z_j, w_j))$ of class $C^{2m+2-(p+q)}$. 
Then the first term of the right hand side of the equation (\ref{eq:diff_vp's_4}) can be described as 
\begin{align*}
&\sum_{m+1\leq p+q\leq 2m}(t_{jk}^{-p+q}\vp_k^{p, q}(z_k)-\vp_j^{p, q}(z_j))\cdot w_j^p\overline{w_j}^q \\
&= \sum_{m+1\leq p+q\leq 2m}(t_{jk}^{-p+q}\vp_k^{p, q}(z_k(z_j, 0))-\vp_j^{p, q}(z_j))\cdot w_j^p\overline{w_j}^q \\
&\ \ \ \ +\sum_{m+1\leq p+q\leq 2m}\ \ \ \sum_{0<\nu+\lambda\leq 2m+2-(p+q)} t_{jk}^{-p+q}H_{kj, \nu\lambda}^{p, q}(z_j)\cdot w_j^{p+\nu}\overline{w_j}^{q+\lambda} \\
&\ \ \ \ + o(|w_j|^{2m+2}). 
\end{align*}
As the functions $F_{kj, m+2}$, $G_{kj, m+2}^{(p)}$, and $G_{kj, m+2}^{(q)}$ are holomorphic functions with zero of multiplicity larger than $m$ along $w_j=0$, 
it follows from the equation (\ref{eq:diff_vp's_4}) that 
\[
t_{jk}^{-m+\mu}\vp_k^{m, \mu}(z_k(z_j, 0))-\vp_j^{m, \mu}(z_j)\equiv 0
\]
holds (Here we used the fact that $H^{p, q}_{kj, \nu\lambda}\equiv 0$ holds if $p\leq m$ and $q< \mu (\leq m)$, which follows from (Assertion)$_{m-1}$ and the inductive assumption). 

When $\mu<m$, one has that $m+\nu\leq 2m-1$. 
In this case, it follows from the equation 
(\ref{eq:thm12_lasteq}) and the argument as in the proof of 
Lemma \ref{lem_ph_inductively} that $\{(U_j, \vp_j^{m, \mu})\}$ defines a global section of $N_{Y/X}^{-m+\mu}$ which is pluriharmonic. 
Therefore $\{(U_j, \vp_j^{m, \mu})\}$ is a locally constant global section. 
One can show that $\{(U_j, \vp_j^{\mu, m})\}$ is a locally constant global section of $N_{Y/X}^{m-\mu}$ in the same manner. 

Finally, let us consider the functions $\vp_j^{m, m}$'s. 
By the same argument as above, one has that 
$\{(U_j, \vp_j^{m, m})\}$ glues up to define a global function on $Y$. 
As the function $\vp_j$ is plurisubharmonic, 
it follows from Lemma \ref{lem_positive_function} and the equation (\ref{eq:thm12_lasteq}) 
that $\vp_j^{m, m}$ is also plurisubharmonic. 
Therefore, as $Y$ is compact, one has that $\vp_j^{m, m}$ is constant by using the maximum principle. 
\end{proof}

(Assumption)$_m$ follows from Lemma \ref{lem:12main}, which completes the proof of Theorem \ref{thm:submain_1}. 
\qed

\subsection{Proof of Corollary \ref{cor:main}}
It follows form Theorem \ref{thm:submain_1} that the pair $(Y, X)$ is of infinite type when $[Y]$ is $C^\infty$-semipositive. 
Therefore the corollary follows from Ueda's theorem Theorem \ref{thm:ueda_linearization}.  
\qed

\section{Proof of Theorem \ref{thm:submain_2}}

Under both of the assumptions $(a)$ and $(b)$, one can easily show that $(ii)$ implies $(i)$, see \cite[\S 2.1]{K2019}. 
In this section, we assume the assertion $(i)$ and show $(ii)$. 
As the theorem follows from Corollary \ref{cor:main} when $N_{Y/X}^m$ is holomorphically trivial for a positive integer $m$, 
we also assume that $N_{Y/X}^m$ is not holomorphically trivial for any positive integer $m$. 

As $[Y]$ is $C^\infty$-semipositive, one can take a function $\psi$ as in Proposition \ref{prop:1}. 
For a sufficiently large real number $N$, define a domain $\Omega_N$ by 
$\Omega_N\coloneqq \{x\in X\setminus Y\mid \psi(x)<N\}$ 
under the assumption $(a)$, and by 
$\Omega_N\coloneqq \{x\in \Omega\setminus Y\mid \psi(x)<N\}$ 
under the assumption $(b)$. 
In what follows, we always assume that $N$ is large enough so that $\del\Omega_N$ is a real submanifold of class $C^\infty$ (see Remark \ref{rmk:omega_boundary}). 

First, we show the following: 

\begin{lemma}\label{lem:Hartogs_does_not_hold}
The Hartogs type extension theorem does not hold on $\Omega_N$ for a sufficiently large $N$. 
\end{lemma}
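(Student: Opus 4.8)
The plan is to argue by contradiction: I assume that the Hartogs type extension theorem holds on $\Omega_N$ for an unbounded set of values of $N$, and I glue the resulting extensions into an extension over $\Omega_\infty \coloneqq \bigcup_N \Omega_N$. Since $\psi\to +\infty$ along $Y$, the domains $\Omega_N$ increase to $\Omega_\infty$, where $\Omega_\infty = X\setminus Y$ under the assumption $(a)$ and $\Omega_\infty = \Omega\setminus Y$ under the assumption $(b)$ (cf. Remark \ref{rmk:omega_boundary}). I would first record the two topological facts needed below: for all large $N$ the set $\Omega_N$ is connected, and for any compact $K\subset\Omega_\infty$ with $\Omega_\infty\setminus K$ connected the set $\Omega_N\setminus K$ is again connected once $N$ is large. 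Both hold because $Y$ has complex codimension one, so deleting the thin tube $\{\psi\ge N\}$ cannot disconnect $\Omega_\infty\setminus K$.

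The gluing step then runs as follows. Let $K\subset\Omega_\infty$ be compact with $\Omega_\infty\setminus K$ connected, and let $f\in\mathcal{O}(\Omega_\infty\setminus K)$. Choosing $N$ so large that $K\Subset\Omega_N$, the restriction $f|_{\Omega_N\setminus K}$ extends, by the assumed Hartogs property on $\Omega_N$, to some $F_N\in\mathcal{O}(\Omega_N)$. For $N<N'$ the functions $F_N$ and $F_{N'}|_{\Omega_N}$ both equal $f$ on the nonempty open set $\Omega_N\setminus K$, so the identity theorem on the connected $\Omega_N$ forces them to agree; hence the $F_N$ patch to a single $F\in\mathcal{O}(\Omega_\infty)$ with $F|_{\Omega_\infty\setminus K}=f$. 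Thus the Hartogs type extension theorem would hold on $\Omega_\infty$. Under the assumption $(a)$ this is already absurd, because $\Omega_\infty = X\setminus Y$ and $(a)$ asserts exactly that the Hartogs type extension theorem fails on $X\setminus Y$; this settles the lemma in case $(a)$.

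Under the assumption $(b)$ the same reduction leaves me to prove that the Hartogs type extension theorem fails on $\Omega_\infty = \Omega\setminus Y$, and this is the step I expect to be the main obstacle. The difficulty is that $\Omega\setminus Y$ is genuinely pseudoconvex, being exhausted near $Y$ by the plurisubharmonic $\psi$ and near $\partial\Omega$ by the local pseudoconvexity of $\partial\Omega$; so any non-fillable hole must be produced by a global argument, and no point of strict positivity of the exhaustion is available since $[Y]$ is only semipositive. The mechanism I would exploit is that $N_{Y/X}^{-k}$ is unitary flat and, being non-torsion, carries no nonzero holomorphic section for $k\ge 1$ (by the maximum principle, exactly as in the fundamental fact used in the proof of Theorem \ref{thm:submain_1}); consequently a global holomorphic function on $\Omega\setminus Y$ can have no pole along $Y$, so that $\mathcal{O}(\Omega\setminus Y)$ is \emph{small}. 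I would then compare $\mathcal{O}(\Omega\setminus Y)$ with $\mathcal{O}\big((\Omega\setminus Y)\setminus K\big)$ for a hole $K$ placed near $\partial\Omega$ and show, via a Laurent expansion in the fibre coordinate $w_j$ together with this vanishing, that filling $K$ would force a nonzero section of some $N_{Y/X}^{-k}$, a contradiction. Carrying out this cohomological comparison rigorously in the weakly pseudoconvex, possibly Levi-flat, regime is the delicate heart of the argument.
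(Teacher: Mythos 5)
Your treatment of case $(a)$ is essentially the paper's own argument: assuming the Hartogs property on $\Omega_{N}$ for an unbounded set of $N$'s, one extends a function given near $Y$ across the tube and glues by the identity theorem, contradicting $(a)$. (The paper does this with a single well-chosen $N_{\nu_0}$ rather than an infinite patching, but the idea is the same, and your version is sound.)

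The gap is in case $(b)$. There you reduce the lemma to the claim that the Hartogs type extension theorem fails on $\Omega_\infty=\Omega\setminus Y$, and you explicitly leave that claim unproven; but this claim is the entire content of the lemma in case $(b)$, and your sketched route to it is unlikely to close. In particular, the Laurent coefficients (in $w_j$) of a function holomorphic near $Y$ minus $Y$ do not transform as sections of $N_{Y/X}^{-k}$ under the actual transition relations $t_{jk}w_k=w_j+O(w_j^2)$; arranging coordinates in which they do is precisely the linearization problem that Ueda theory and Theorem \ref{thm:submain_1} address, so the asserted ``smallness'' of $\mathcal{O}(\Omega\setminus Y)$ cannot be had for free, and obtaining it would come close to proving the conclusion $(ii)$ of Theorem \ref{thm:submain_2} itself. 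Worse, the reduction discards exactly the feature that makes case $(b)$ trivial: for finite $N$ the domain $\Omega_N=\{x\in\Omega\setminus Y\mid \psi(x)<N\}$ has boundary with at least two connected components, namely $\partial\Omega$ and the compact hypersurface $\{\psi=N\}$ bounding the small tube around $Y$ (Remark \ref{rmk:omega_boundary}). This is the paper's whole proof in case $(b)$: take a function equal to $0$ on a neighborhood of $\partial\Omega$ and equal to $1$ on a neighborhood of $\{\psi=N\}$; it is holomorphic near $\partial\Omega_N$, yet since $\Omega_N$ is connected the identity theorem forbids any holomorphic extension to $\Omega_N$. By passing to the increasing union $\Omega\setminus Y$ you merge the inner boundary component into an end along $Y$, the connected complement of the hole $K$ then meets both ends, and this elementary obstruction disappears---which is exactly why you then find yourself facing a hard global problem. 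The fix: keep your gluing argument for case $(a)$, but in case $(b)$ do not pass to the limit; exhibit the two boundary components of $\Omega_N$ directly.
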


\begin{proof}
Under the assumption $(b)$, the boundary $\del\Omega_N$ has at least two connected components for a sufficiently large $N$, from which the assertion easily follows. 
Assume that $(a)$ holds, and that there exists a sequence $\{N_\nu\}_{\nu=0}^\infty\subset \mathbb{R}$ such that $N_\nu\to \infty$ as $\nu\to \infty$ and that the Hartogs type extension theorem holds on $\Omega_{N_\nu}$ for any integer $\nu$. 
Take a neighborhood $V$ of $Y$ in $X$. 
As $Y$ is compact, one may assume that $\del V$ is relatively compact in a small neighborhood of $Y$ by shrinking $V$ if necessary. 
Therefore, there exists a positive integer $\nu_0$ such that $\{\psi=N_{\nu_0}\}$ is a relatively compact subset of $V\setminus Y$. 
 
Let $f$ be a holomorphic function defined on $V\setminus Y$. 
As $V\cap \Omega_{N_{\nu_0}}$ is a neighborhood of the boundary $\del \Omega_{N_{\nu_0}}$ in $\Omega_{N_{\nu_0}}$, it follows from the assumption that there exists a holomorphic function $F$ on $\Omega_{N_{\nu_0}}$ such that $F|_{V\cap \Omega_{N_{\nu_0}}}=f|_{V\cap \Omega_{N_{\nu_0}}}$. 
As $\{(V\setminus Y, f), (\Omega_{N_{\nu_0}}, F)\}$ glues up to define a holomorphic function on $X\setminus Y$, the Hartogs type extension theorem holds on $X\setminus Y$, which contradicts to the condition $(a)$. 
\end{proof}

It follows from Ohsawa's theorem Theorem \ref{thm:ohsawa_Hartogs} and 
Lemma \ref{lem:Hartogs_does_not_hold} that $\del\Omega_N$ is Levi-flat for a sufficiently large real number $N$. 
Thus $\del\Omega$ is Levi-flat under the assumption $(b)$, 
and the real hypersurface $\{\psi=N\}$ is Levi-flat for a sufficiently large $N$ under the assumptions $(a)$ and $(b)$. 
From this, we have the existence of a neighborhood $V_0$ of $Y$ such that the complex Hessian of the function $\psi$ has exactly one positive eigenvalue at any point of $V_0\setminus Y$. 
Indeed, as $\psi$ is a function as in Proposition \ref{prop:1} $(ii)$, one can easily show the existence of a neighborhood $V'$ of $Y$ such that the complex Hessian of $\psi$ has at least one positive eigenvalue at any point of $V'\setminus Y$. 
If there exists a sequence $\{p_\nu\}_{\nu=0}^\infty$ of points in $V'\setminus Y$ such that $p_\nu$ accumulates on  $Y$ and that the complex Hessian of $\psi$ has at least two positive eigenvalues at each $p_\nu$, one has that the hypersurface $\{x\in V'\setminus Y\mid \psi(x)=\psi(p_\nu)\}$ is not Levi-flat (see \S \ref{section:ohsawa_Hartogs_ext}), which contradicts to Lemma \ref{lem:Hartogs_does_not_hold}. 

Take such a neighborhood $V_0$ of $Y$. 
Then one can consider the Monge--Amp\`ere foliation $\mathcal{F}$ on $V_0\setminus Y$ associated with $\sqrt{-1}\ddbar \psi$, which is of real-codimension $2$ and whose leaves are the images of holomorphic immersions (see \cite{S} and \cite[Theorem 2.4]{BK}). 
For this foliation $\mathcal{F}$, we first prove the following: 

\begin{lemma}\label{lem:hojyo}\ \\
$(i)$ For a sufficiently large positive number $N$, any leaf of the Levi foliation of $\{\psi = N\}$ is a leaf of $\mathcal{F}$. \\
$(ii)$ The foliation $\mathcal{F}$ is holomorphic also in the transversal directions by shrinking $V_0$ if necessary. \\
$(iii)$ There exists a holomorphic (non-singular) foliation on $V_0$ which has $Y$ as a leaf and which coincides with $\mathcal{F}$ on $V_0\setminus Y$. 
\end{lemma}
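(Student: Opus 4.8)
The plan is to handle the three assertions in order: reducing $(i)$ to linear algebra, extracting the transverse structure in $(ii)$ from the jets computed in the proof of Theorem~\ref{thm:submain_1}, and extending across $Y$ in $(iii)$. Throughout, fix a point $p$ of a level set $\{\psi=N\}$ with $N$ large and write $H$ for the complex Hessian of $\psi$ at $p$; by the discussion preceding the lemma, $H$ is positive semi-definite with exactly one positive eigenvalue on $V_0\setminus Y$, so $\ker H\subset T_{X,p}$ has complex dimension $d=n-1$ and equals $T_p\mathcal{F}$, while $\del/\del w_j\notin\ker H$.

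For $(i)$, note that the complex tangent space $T^{1,0}_{\{\psi=N\},p}=\{v\in T_{X,p}\mid \del\psi(v)=0\}$ also has complex dimension $d$, and that Levi-flatness of $\{\psi=N\}$ (which we have deduced from Theorem~\ref{thm:ohsawa_Hartogs} and Lemma~\ref{lem:Hartogs_does_not_hold}) means that $H$ vanishes on it. Since a positive semi-definite Hermitian form annihilating a vector $v$ has $v$ in its kernel, the $d$-dimensional space $T^{1,0}_{\{\psi=N\},p}$ is contained in, hence equal to, the $d$-dimensional space $\ker H=T_p\mathcal{F}$. Thus the Levi distribution and the tangent distribution of $\mathcal{F}$ agree along $\{\psi=N\}$, so each Levi leaf is an integral manifold of $T\mathcal{F}$ and therefore an open subset of a leaf of $\mathcal{F}$; equality of dimensions shows it is a leaf. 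In particular $\psi$ is constant on each leaf of $\mathcal{F}$, i.e. it is a first integral.

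For $(ii)$, by \cite[Theorem 2.4]{BK} (see also \cite{S}), since $H$ has constant rank one, every leaf of $\mathcal{F}$ is the image of a holomorphic immersion; as $\ker H$ is transverse to $\del/\del w_j$, each leaf is locally a holomorphic graph $w_j=g(z_j)$. To pin down the transverse structure I would use the jets from \S\ref{section:thm12_step1}: working in a system of type $m+1$ (available for every $m$ by Theorem~\ref{thm:submain_1}), $\psi=-\log|w_j|^2+\vp_j$ with $\vp_j^{p,q}$ constant for $p,q\le m$, whence $(\vp_j)_{z_j^\lambda\overline{w_j}}=o(|w_j|^{m-1})$, $(\vp_j)_{z_j^\lambda\overline{z_j^\mu}}=o(|w_j|^{2m-2})$, while $(\vp_j)_{w_j\overline{w_j}}$ is bounded below by a positive constant near $Y$. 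Solving $Hv=0$ then shows $T\mathcal{F}$ is spanned by $\del/\del z_j^\lambda+c_\lambda\,\del/\del w_j$ with $c_\lambda=-(\vp_j)_{z_j^\lambda\overline{w_j}}/(\vp_j)_{w_j\overline{w_j}}=o(|w_j|^{m-1})$, so $\mathcal{F}$ deviates from the model foliation $\{w_j=\mathrm{const}\}$ only to high order in $w_j$. I would conclude that the almost complex structure induced on the normal bundle of $\mathcal{F}$ is integrable, exhibiting $w_j$ as a transverse holomorphic coordinate; the transitions $t_{jk}w_k=w_j+O(w_j^{m+1})$ being holomorphic, these glue to a transversely holomorphic structure after shrinking $V_0$. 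This is the step I expect to be the main obstacle: passing from the order-by-order agreement of the leaves with $\{w_j=\mathrm{const}\}$ (valid for every $m$) to genuine transverse holomorphy requires controlling the holonomy exactly, for which I would combine the constancy of the $\vp_j^{p,q}$ with the Monge--Amp\`ere structure as in \cite{S} and \cite[Theorem 2.4]{BK}, together with a normal-families argument near $Y$ in the spirit of \cite[\S 4]{KO}.

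For $(iii)$, near $Y$ the transverse coordinate behaves like $w_j$, which extends holomorphically across $Y=\{w_j=0\}$, and the leaves $w_j=g(z_j)$ satisfy $g\to 0$ as the leaf tends to $Y$; I would therefore adjoin $Y$ as the leaf corresponding to the transverse value $0$. It remains to check that the transverse coordinate stays a holomorphic submersion along $Y$, so that the extended foliation is non-singular, and that the local pictures $\{w_j=\mathrm{const}\}$ glue across charts through the constants $t_{jk}\in\U$; this produces the desired non-singular holomorphic foliation on $V_0$ with $Y$ as a leaf, coinciding with $\mathcal{F}$ on $V_0\setminus Y$.
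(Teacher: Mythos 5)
Your part $(i)$ is correct and is essentially the paper's argument: since the Levi form is the restriction of the positive semi-definite form $\sqrt{-1}\ddbar\psi$, its vanishing forces the complex tangent space of $\{\psi=N\}$ into $\ker(\ddbar\psi)=T\mathcal{F}$, and a dimension count plus the Frobenius uniqueness of integral manifolds identifies the Levi leaves with the leaves of $\mathcal{F}$. Your part $(iii)$ is also roughly the paper's route (extend the transverse data across $Y$ using $w_j$, the positivity of the coefficient $B=\vp_j^{1,1}$, and Riemann's extension theorem), but it stands or falls with $(ii)$.

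The genuine gap is $(ii)$, and it is exactly the step you flag as ``the main obstacle'': nothing in your sketch closes it, and your proposed tools cannot. Since $\psi$ is only of class $C^\infty$, the constancy of the coefficients $\vp_j^{p,q}$ for $p,q\leq m$ (available for every $m$) only says that the leaves of $\mathcal{F}$ osculate the model foliation $\{w_j=\mathrm{const}\}$ to high order along $Y$; a transversely smooth but non-holomorphic foliation can have this property, so no jet computation, Monge--Amp\`ere formalism, or normal-families argument can extract genuine transverse holomorphy from it. The paper's proof of $(ii)$ is of a different, essentially global, nature: writing $\ddbar\psi=F\cdot\del\psi\wedge\delbar\psi$ via Lemma \ref{lem:matrix_fundamental}, the $d$-closedness of $\ddbar\psi$ shows $F$ is leafwise constant; the crucial step is then to show $F$ is constant on each level set $\{\psi=N\}$ for $N$ large. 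This is done by contradiction: if not, Sard's theorem and the finiteness of a holonomy orbit produce \emph{compact} leaves $L_\nu$ arbitrarily close to $Y$, homologous to $\mu_\nu Y$, and the Albanese/Picard-variety argument (\cite[Proposition 2.7]{CLPT}, together with \cite[Corollary (2.4)]{Su}) then yields a non-constant holomorphic function on $X\setminus(L_{\nu_1}\cup L_{\nu_2})$, contradicting the standing assumption of \S 4 that $N_{Y/X}$ is a non-torsion element of ${\rm Pic}^0(Y)$. Once $F=G\circ\psi$, one solves an ODE for $\chi$ so that $\widehat\psi=\chi\circ\psi$ is pluriharmonic; locally $\widehat\psi=\mathrm{Re}\,h_j$ with $h_j$ a holomorphic submersion whose level sets contain the leaves, which is precisely transverse holomorphy. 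Note that your proposal never invokes the hypotheses that $X$ is compact K\"ahler and that $N_{Y/X}$ is non-torsion; the fact that the paper's proof needs them at exactly this point is strong evidence that the purely local argument near $Y$ that you outline cannot be completed.
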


\begin{proof}
$(i)$ Let $N$ be a sufficiently large number and $L$ be a leaf of the Levi foliation of $\{\psi = N\}$. 
Note that $L$ is the image of a holomorphic immersion from some complex manifold to $X$. 
Then, as it is clear that $\psi|_L$ is a constant function, the assertion $(i)$ holds. 

$(ii)$ By using the dual of $(\del\psi, d z_j^1, \dots, dz_j^{n-1})$ as a basis of the tangent bundle at each point of $(V_0\cap V_j)\setminus Y$ ($\{V_j\}$ is as in \S \ref{section:ueda_theory} and $z_j=(z_j^1, z_j^2, \dots, z_j^{n-1})$ is a holomorphic coordinate of the leaf which passes through the point) and considering the representation matrix of the complex Hessian of $\psi$, 
it follows form Lemma \ref{lem:matrix_fundamental} below that 
there exists a function $F\colon V_0\setminus Y\to \mathbb{R}_{>0}$ such that 
\[
\ddbar \psi = F\cdot \del\psi\wedge \delbar \psi
\]
holds. 
As the left hand side of the equation above is $\del$-closed, one has that 
\[
0 = (\del F)\wedge \del\psi\wedge \delbar \psi
-F\cdot \del\psi \wedge \ddbar \psi
= (\del F)\wedge \del\psi\wedge \delbar \psi
-F^2\cdot \del\psi \wedge \del\psi\wedge \delbar \psi, 
\]
from which it follows that $(\del F)\wedge \del\psi\wedge \delbar \psi = 0$. 
Similarly, it follows that  $(\delbar F)\wedge \del\psi\wedge \delbar \psi = 0$. 
Thus one has that $(dF)\wedge \del\psi\wedge \delbar \psi = 0$, 
which implies that the function $F$ is leafwise constant. 

Let us show that $F$ is constant on $\{\psi=N\}$ if $N$ is sufficiently large by contradiction. 
Assume that there exists an increasing sequence $\{N_\nu\}_{\nu=0}^\infty$ of positive numbers such that $N_\nu\to \infty$ as $\nu\to \infty$ and that $F$ is not constant on each $\{\psi=N_\nu\}$. 
Take $\nu$ sufficiently large so that $\{\psi>N_\nu\}\cup Y$ is a sufficiently small tubular neighborhood of $Y$ whose boundary is a real submanifold of class $C^\infty$  and is a circle bundle over $Y$ (see Remark \ref{rmk:omega_boundary}). 
Fix a (small) real $2$-dimensional disc $D$ which intersect $Y$ transversally. 
Denote by $S_\nu$ the intersection $D\cap \{\psi=N_\nu\}$. 
We may assume that $S_\nu$ is diffeomorphic to a circle. 
Denote by $h_\nu\colon \pi_1(Y, *)\to {\rm Diff}(S_\nu)$ the holonomy of the Levi foliation on $\{\psi=N_\nu\}$, where $*$ is a base point of $Y$ and ${\rm Diff}(S_\nu)$ is the diffeomorphism group of $S_\nu$ (For the definition of the holonomy map, see \cite{KO} for example). 
As the image $F(\{\psi=N_\nu\})$ is not a point and $F$ is leafwise constant, one has that $F(S_\nu)$ is not a point. 
Thus it follows from Sard's theorem that there exists a real number $m_\nu\in F(S_\nu)$ such that $(F|_{S_\nu})^{-1}(m_\nu)$ is a finite subset of $S_\nu$. 
Take a point $p_\nu\in (F|_{S_\nu})^{-1}(m_\nu)$ and denote by $L_\nu$ the leaf of the Levi foliation on $\{\psi=N_\nu\}$ such that $p_\nu\in L_\nu$. 
As $F$ is leafwise constant, one has that $F(h_\nu(\gamma)(p_\nu))=F(p_\nu)$ for any $\gamma\in \pi_1(Y, *)$, 
which implies that $\{h_\nu(\gamma)(p_\nu)\mid \gamma\in\pi_1(Y, *)\}$ is a finite subset of $S_\nu$. 
Therefore, $L_\nu$ is a compact holomorphic submanifold of $X\setminus Y$. 
Additionally one has that $L_\nu$ is homologous to a cycle $\mu_\nu Y$ of $X$ for some positive integer $\mu_\nu$, 
since $L_\nu$ is sufficiently close to $Y$ and it topologically covers $Y$ (consider the restriction of the projection map of the topological circle bundle $\{\psi=N_\nu\}\to Y$). 
By \cite[Proposition 2.7]{CLPT}, we may assume that the natural map $H^1(X, \mathcal{O}_X)\to H^1(Y, \mathcal{O}_Y)$ is injective, since otherwise the natural map $X\to {\rm Alb}(X)/{\rm Alb}(Y)$ defines a fibration such that $Y$ is a fiber, which contradicts to our assumption that $N_{Y/X}$ is a non-torsion element of the Picard variety ${\rm Pic}^0(Y)$. 
As the natural map ${\rm Pic}^0(X)\to {\rm Pic}^0(Y)$ is a finite covering to the image in this case, for sufficiently large $\nu_1$ and $\nu_2$, 
there exists a positive integer $M$ such that 
the line bundle $[\mu_{1} Y-L_{\nu_1}]^{\otimes M\mu_2}$ coincides with $[\mu_{2} Y-L_{\nu_2}]^{\otimes M\mu_1}$, where we are denoting $\mu_{\nu_\lambda}$ by $\mu_{\lambda}$ ($\lambda=1, 2$), 
since both $[\mu_{1} Y-L_{\nu_1}]^{\otimes \mu_2}$ and $[\mu_{2} Y-L_{\nu_2}]^{\otimes \mu_1}$ are topologically trivial line bundles whose restrictions to $Y$ coincide with the unitary flat line bundle $N_{Y/X}^{\mu_1\mu_2}$. 
Thus, as the line bundle $[M\mu_2 L_{\nu_1}-M\mu_1 L_{\nu_2}]$ is holomorphically trivial, 
one has that there exists a non-constant holomorphic function on a neighborhood $X\setminus (L_{\nu_1}\cup L_{\nu_2})$ of $Y$, which contradicts to the assumption that $N_{Y/X}$ is a non-torsion element of ${\rm Pic}^0(Y)$ (see also \cite[Corollary (2.4)]{Su} for example). 

Therefore, by shrinking $V_0$ if necessary, we may assume that there exists a function $G\colon \mathbb{R}\to \mathbb{R}$ such that $F=G\circ \psi$. 
Fix a point $p\in V_0\setminus Y$ and take a function $\chi\colon\mathbb{R}\to \mathbb{R}$ such that 
\[
\chi'(t) = \exp\left(-\int_{\psi(p)}^t G(s)\,ds\right). 
\]
As $G(t)\cdot \chi'(t)+\chi''(t)\equiv 0$, the function $\widehat{\psi}\coloneqq \chi\circ \psi$ satisfies 
\[
\ddbar\widehat{\psi} 
= \chi'(\psi)\cdot \ddbar \psi + \chi''(\psi)\cdot \del\psi\wedge \delbar \psi
= \chi'(\psi)\cdot \left(\ddbar \psi - G\circ\psi\cdot \del\psi\wedge \delbar \psi\right)\equiv 0, 
\]
which means that $\widehat{\psi}$ is pluriharmonic. 
The assertion $(ii)$ follows from this, since contour hypersurfaces of $\widehat{\psi}$ are also contour hypersurfaces of $\psi$. 

$(iii)$ 
Take an open covering $\{U_j\}$ of $Y$, a neighborhood $V_j$ of each $U_j$, and coordinates $(z_j, w_j)$ of $V_j$ as in \S \ref{section:ueda_theory}. 
From Theorem \ref{thm:submain_1}, it follows that we may assume that the system $\{(V_j, w_j)\}$ is of type $3$. 
In this case, by (Assertion)$_2$ in the previous section, there exist constants $A_j^{(1)}$, $A_j^{(2)}$, and $B$ such that 
\[
\psi = -\log |w_j|^2 + A_j^{(1)}w_j+\overline{A_j^{(1)}}\overline{w_j} + A_j^{(2)}w_j^2+\overline{A_j^{(2)}}\overline{w_j}^2 + B\cdot |w_j|^2 + O(|w_j|^3)
\]
holds as $|w_j|\to 0$ on each $V_j$. 
Thus one has the description
\begin{equation}\label{eq:ddbar_vp_foliation}
\sqrt{-1}\ddbar \vp_j=B\cdot \sqrt{-1}dw_j\wedge d\overline{w_j} + O(|w_j|). 
\end{equation}
Denote by $\mathbb{P}(T_{V_0\setminus Y}^*)$ the projective space bundle over $V_0\setminus Y$ whose fibers consists of the lines in the cotangent spaces. 
Let $\sigma$ be the section of $\mathbb{P}(T_{V_0\setminus Y}^*)$ defined by 
\[
p\mapsto \left[a(p)dw_j+\sum_{\lambda=1}^{n-1}b_\lambda(p)\cdot dz_j^\lambda\right], 
\]
where $a(p)dw_j+\sum_{\lambda=1}^{n-1}b_\lambda(p)\cdot dz_j^\lambda$ is a $1$-form which is orthogonal to the leaf passes through $p$. 
From the assertion $(ii)$, it follows that $\sigma$ is a holomorphic section. 
As $B>0$ holds by the construction, it follows from the equation (\ref{eq:ddbar_vp_foliation}) that 
we may assume $a(p)\not= 0$ on a neighborhood of $Y$. 
Therefore there exist holomorphic functions $\widehat{b}_\lambda$'s such that 
\[
\sigma(p) = \left[dw_j+\sum_{\lambda=1}^{n-1}\widehat{b}_\lambda(p)\cdot dz_j^\lambda\right]. 
\]
Again by the equation (\ref{eq:ddbar_vp_foliation}), one has that $\widehat{b}_\lambda(p)\to 0$ as $p\to Y$, 
from which it follows that $\sigma$ can be holomorphically extended to $V_0$ by letting $\sigma(p)\coloneqq [dw_j]$ if $p\in Y$ (Apply Riemann's extension theorem to each $\widehat{b}_\lambda$). 
As ${\rm Ker}(\sigma)$ is clearly  involutive (and thus integrable), it defines a holomorphic extension of the foliation $\mathcal{F}$. 
\end{proof}

\begin{lemma}\label{lem:matrix_fundamental}
Let $A=(a_{\lambda}^\mu)_{1\leq \lambda, \mu\leq n}$ be a positive semi-definite Hermitian matrix of order $n$. 
Assume that $A$ admits exactly one positive eigenvalue, and that 
$a_{\lambda}^\mu=0$ holds if $2\leq \lambda\leq n$ and $2\leq \mu\leq n$. 
Then $a_1^\lambda=0$ and $a_\lambda^1=0$ hold for each $\lambda\in\{2, 3, \dots, n\}$. 
\end{lemma}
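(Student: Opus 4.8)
The plan is to exploit positive semi-definiteness through the nonnegativity of the principal $2\times 2$ minors, which converts the hypothesis that the lower-right block vanishes into a direct constraint on the entries $a_1^\lambda$. Recall that every principal submatrix of a positive semi-definite Hermitian matrix is again positive semi-definite, so in particular each such submatrix has nonnegative determinant; this is the only structural fact I intend to use.

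First I would fix $\lambda\in\{2,3,\dots,n\}$ and pass to the principal submatrix $A_\lambda$ of $A$ obtained by keeping only the $1$st and the $\lambda$th rows and columns, namely
\[
A_\lambda=\begin{pmatrix} a_1^1 & a_1^\lambda \\ a_\lambda^1 & a_\lambda^\lambda\end{pmatrix}.
\]
By the hypothesis on the vanishing block one has $a_\lambda^\lambda=0$ (since $2\leq \lambda\leq n$), and by Hermiticity $a_\lambda^1=\overline{a_1^\lambda}$. Hence
\[
\det A_\lambda = a_1^1\cdot a_\lambda^\lambda - a_1^\lambda a_\lambda^1 = -\,|a_1^\lambda|^2.
\]

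Since $A$ is positive semi-definite, so is $A_\lambda$, whence $\det A_\lambda\geq 0$. Combined with the computation above this forces $-|a_1^\lambda|^2\geq 0$, so $a_1^\lambda=0$, and then Hermiticity gives $a_\lambda^1=\overline{a_1^\lambda}=0$ as well. As $\lambda\in\{2,\dots,n\}$ was arbitrary, this is precisely the assertion of the lemma.

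I do not expect a serious obstacle: the whole argument rests on the $2\times 2$ principal minors, and the assumption that $A$ has exactly one positive eigenvalue turns out not to be needed for the stated conclusion (the vanishing of the off-diagonal entries follows from semi-definiteness alone; the single-positive-eigenvalue hypothesis would only additionally guarantee $a_1^1>0$). Should one prefer an argument that does invoke that hypothesis, the alternative route is to observe that a positive semi-definite Hermitian matrix with a single positive eigenvalue has rank one, hence can be written as $A=vv^*$ for some vector $v$; then $a_\lambda^\lambda=|v_\lambda|^2=0$ forces $v_\lambda=0$ for every $\lambda\geq 2$, and $a_1^\lambda=v_1\overline{v_\lambda}=0$ and $a_\lambda^1=v_\lambda\overline{v_1}=0$ follow immediately.
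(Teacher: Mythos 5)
Your proof is correct and follows essentially the same route as the paper: both fix $\lambda$, pass to the $2\times 2$ principal submatrix $A_\lambda$, compute $\det A_\lambda=-|a_1^\lambda|^2$ using $a_\lambda^\lambda=0$, and conclude the entry vanishes from positive semi-definiteness (the paper phrases the last step via $A_\lambda$ having zero as an eigenvalue, you via $\det A_\lambda\geq 0$, which amounts to the same thing). Your side remarks --- that the single-positive-eigenvalue hypothesis is not needed, and the alternative rank-one argument $A=vv^*$ --- are also correct.
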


\begin{proof}
For each $\lambda\in\{2, 3, \dots, n\}$, consider the matrix 
\[
A_\lambda\coloneqq\left(
\begin{array}{rr}
a_1^1 & a_1^\lambda \\
a_\lambda^1& a_\lambda^\lambda \\
\end{array}
\right). 
\]
Then it holds that ${\rm det}\,A_\lambda=-|a_\lambda^1|^2$, since $a_\lambda^\lambda=0$. 
As $A_\lambda$ has zero as an eigenvalue, one has that ${\rm det}\,A_\lambda=0$, from which the lemma follows. 
\end{proof}

In what follows, 
we use the notation in the proof of Lemma \ref{lem:hojyo} $(iii)$, 
and denote the extended foliation on $V_0$ as in Lemma \ref{lem:hojyo} $(iii)$ by the same letter $\mathcal{F}$. 
Next, we prove the following lemma and proposition. 
 
\begin{lemma}\label{lem:last1}
By shrinking $V_j$'s if necessary, there exists a holomorphic function $\eta_j$ on each $V_j$ such that $\{(V_j, (z_j, \eta_j))\}$ is a foliation chart of $\mathcal{F}$ (i.e. $(z_j, \eta_j)$ can be regarded as coordinates on $V_j$ and each leaf can be described as $\{\eta_j=\text{constant}\}$ in $V_j$), and that 
$\eta_j=w_j+O(w_j^2)$ holds as $w_j\to 0$. 
\end{lemma}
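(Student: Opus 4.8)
The plan is to produce $\eta_j$ as a suitably normalized holomorphic first integral of the foliation $\mathcal{F}$. Recall from Lemma \ref{lem:hojyo} $(iii)$ that, after extending to $V_0$, the foliation $\mathcal{F}$ is the non-singular holomorphic foliation whose leaves are the integral manifolds of $\ker\sigma$, where $\sigma=[\,dw_j+\sum_{\lambda=1}^{n-1}\widehat{b}_\lambda\,dz_j^\lambda\,]$ is a holomorphic section of the projectivized cotangent bundle, the $\widehat{b}_\lambda$ are holomorphic on $V_j$, and $Y=\{w_j=0\}$ is a leaf (so $\widehat{b}_\lambda|_Y\equiv 0$). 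Writing $\omega_j\coloneqq dw_j+\sum_\lambda \widehat{b}_\lambda\,dz_j^\lambda$, a function $\eta_j$ whose level sets are the leaves is exactly a holomorphic first integral, i.e. a holomorphic $\eta_j$ with $d\eta_j\wedge\omega_j=0$. The content of the lemma is to find such a first integral that is moreover normalized so that $\eta_j=w_j+O(w_j^2)$ and that $(z_j,\eta_j)$ are coordinates.

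The main point, and the step I expect to be the crux, is to improve the vanishing of $\widehat{b}_\lambda$ from first order to \emph{second} order along $Y$, i.e. to show $\widehat{b}_\lambda=O(w_j^2)$. For this I would use the identity $\sqrt{-1}\ddbar\psi=F\cdot\sqrt{-1}\del\psi\wedge\delbar\psi$ established in the proof of Lemma \ref{lem:hojyo} $(ii)$, which exhibits the leaves of $\mathcal{F}$ as the integral manifolds of $\ker\del\psi$; thus on $V_0\setminus Y$ the conormal is $\sigma=[\del\psi]$. Since $\del\psi=-dw_j/w_j+\del\vp_j=\bigl((\vp_j)_{w_j}-w_j^{-1}\bigr)\,dw_j+\sum_\lambda (\vp_j)_{z_j^\lambda}\,dz_j^\lambda$, normalizing the $dw_j$-coefficient to $1$ gives $\widehat{b}_\lambda=(\vp_j)_{z_j^\lambda}/\bigl((\vp_j)_{w_j}-w_j^{-1}\bigr)$. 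Now I invoke the refined expansion obtained in the proof of Theorem \ref{thm:submain_1}: taking $\{(V_j,w_j)\}$ of type $3$, (Assertion)$_2$ shows that every Taylor coefficient $\vp_j^{p,q}$ with $p+q\le 2$ is constant, so $(\vp_j)_{z_j^\lambda}=O(|w_j|^3)$, while the denominator behaves like $-w_j^{-1}$ near $Y$. Hence $|\widehat{b}_\lambda|=O(|w_j|^4)$; since $\widehat{b}_\lambda$ is holomorphic, this modulus bound forces the genuine vanishing order $\widehat{b}_\lambda=O(w_j^2)$ (in fact $O(w_j^4)$). Converting the a priori merely smooth estimate on the ratio into an honest holomorphic vanishing order is exactly where the holomorphy of $\sigma$ from Lemma \ref{lem:hojyo} is indispensable.

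Granting $\widehat{b}_\lambda=O(w_j^2)$, I would finish by the holomorphic Frobenius theorem. Since $\omega_j$ is an integrable holomorphic $1$-form, after shrinking $V_j$ there is a holomorphic first integral $u_j$ with nowhere-vanishing differential whose level sets are the leaves; subtracting a constant I arrange $u_j|_Y\equiv 0$, and as $Y=\{w_j=0\}$ is smooth I may write $u_j=w_j\cdot\widetilde{v}_j$ with $\widetilde{v}_j$ holomorphic and, after a further shrinking, nowhere zero. The proportionality $du_j\wedge\omega_j=0$ reads $w_j\,\partial_{z_j^\lambda}\widetilde{v}_j=\widehat{b}_\lambda\bigl(\widetilde{v}_j+w_j\,\partial_{w_j}\widetilde{v}_j\bigr)=O(w_j^2)$, so $\partial_{z_j^\lambda}\widetilde{v}_j=O(w_j)$ and therefore $\widetilde{v}_j(z_j,0)$ is a constant $c_j$. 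Setting $\eta_j\coloneqq c_j^{-1}u_j=w_j\cdot(\widetilde{v}_j/c_j)$ yields a first integral with $\eta_j=w_j+O(w_j^2)$, because $\widetilde{v}_j/c_j\equiv 1$ on $Y$. Finally $d\eta_j|_Y=dw_j$ shows that the Jacobian of $(z_j,w_j)\mapsto(z_j,\eta_j)$ is non-singular along $Y$, so after one last shrinking $(z_j,\eta_j)$ are coordinates on $V_j$ and $\{(V_j,(z_j,\eta_j))\}$ is the desired foliation chart.
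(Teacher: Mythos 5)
Your proof is correct, but it takes a genuinely different route from the paper's. The paper starts from an arbitrary holomorphic foliation chart $(z_j,\eta_j)$ (existence taken, as you take it, from Lemma \ref{lem:hojyo}), expands $\eta_j=\sum_{\nu\geq 1}a_{j,\nu}(z_j)\,w_j^\nu$, and proves that the first coefficient $a_{j,1}$ is constant: Lemma \ref{lem:matrix_fundamental}, applied to the complex Hessian of $\vp_j$ written in the frame $\del/\del\eta_j,\del/\del z_j^1,\dots,\del/\del z_j^d$, gives $\del\eta_j\wedge\delbar\overline{\eta_j}=F_j\cdot\ddbar\vp_j$ with $F_j>0$; comparing leading terms along $U_j$ yields $F_j|_{U_j}=|a_{j,1}|^2/B$; and since both sides of the identity are $d$-closed, $dF_j\wedge\ddbar\vp_j\equiv 0$, hence $F_j|_{U_j}$ is constant, so $|a_{j,1}|$ is constant and the holomorphic function $a_{j,1}$ is itself constant---dividing $\eta_j$ by this constant proves the lemma. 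You instead quantify the tangency of $\mathcal{F}$ to the foliation $\{w_j=\mathrm{const}\}$: from $\sigma=[\del\psi]$ off $Y$, the expansion supplied by (Assertion)$_2$, and---crucially, as you yourself stress---the holomorphy of the $\widehat{b}_\lambda$ from Lemma \ref{lem:hojyo}, you upgrade a modulus estimate to the holomorphic vanishing $\widehat{b}_\lambda=O(w_j^2)$ (indeed $O(w_j^4)$), and then normalize a Frobenius first integral by an explicit computation. Both arguments consume the same inputs (Lemma \ref{lem:hojyo} and the positivity of $B$); the paper's closedness trick is shorter and needs no further appeal to the Taylor data of $\vp_j$, whereas yours extracts strictly more information, namely higher-order agreement of $\mathcal{F}$ with $\{w_j=\mathrm{const}\}$, of which the paper's conclusion only gives the order-two case a posteriori. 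One small inaccuracy that does not affect the outcome: constancy of the $\vp_j^{p,q}$ with $p+q\leq 2$ by itself gives only $(\vp_j)_{z_j^\lambda}=o(|w_j|^2)$; your stated $O(|w_j|^3)$ also uses constancy of $\vp_j^{2,1}$ and $\vp_j^{1,2}$, which is indeed available here (since $r=\infty$, by the last clause of (Assertion)$_2$), and in any case the weaker bound $o(|w_j|^2)$ already forces $\widehat{b}_\lambda=O(w_j^2)$ by holomorphy, so your argument goes through either way.
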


\begin{proof}
As $\mathcal{F}$ is a holomorphic foliation, there exists a holomorphic foliation chart $\{(V_j, (z_j, \eta_j))\}$ by shrinking $V_j$'s if necessary. 
Let 
\[
\eta_j=\sum_{\nu=0}^\infty a_{j, \nu}(z_j)\cdot w_j^\nu
\]
be the expansion. 
As $Y$ is a leaf of $\mathcal{F}$, we may assume that $a_{j, 0}\equiv 0$ for each $j$. 

By definition of the foliation chart, the restriction $\vp_j|_{\{\eta_j=c\}}$ is pluriharmonic for each constant $c$. 
Thus, for each point $p$ in $V_0\cap V_j$, one has that there exists a positive number $F_j(p)$ such that 
\begin{equation}\label{eq:eta_MA}
(\del\eta_j\wedge \delbar\overline{\eta_j})_p=F_j(p)\cdot (\ddbar \vp_j)_p 
\end{equation}
holds (Apply Lemma \ref{lem:matrix_fundamental} to the representation matrix of $(\sqrt{-1}\ddbar \vp_j)_p $ with respect to the basis $\del/\del \eta_j$, $\del/\del z_j^1$, \dots, $\del/\del z_j^d$ of $T_{V_0, p}$). 
By comparing the leading terms of the equation (\ref{eq:eta_MA}), one easily has that 
\[
F_j|_{U_j} = \frac{|a_{j, 1}|^2}{B}
\]
holds. 
By taking the exterior derivative of the equation (\ref{eq:eta_MA}), one has that $dF_j\wedge \ddbar \vp_j \equiv 0$, 
since both $\del\eta_j\wedge \delbar\overline{\eta_j}$ and $\ddbar \vp_j$ are $d$-closed. 
From this, it follows that $d(F_j|_{U_j})\equiv 0$ holds. 
Therefore one has that $a_{j, 1}$ is a constant function, from which the lemma follows. 
\end{proof}

\begin{proposition}\label{prop:last} 
It holds that $({\rm Hol}_{\mathcal{F}, Y}(\gamma))'(0)\in\U$ for any 
element $\gamma\in \pi_1(Y, *)$, where 
\[
{\rm Hol}_{\mathcal{F}, Y}\colon \pi_1(Y, *)\to {\rm Diff}(\mathbb{C}, 0)\coloneqq \{f\in \mathcal{O}_{\mathbb{C}, 0}\mid f(0)=0, f'(0)\not=0\}
\]
is the holonomy of $\mathcal{F}$ along $Y$. 
\end{proposition}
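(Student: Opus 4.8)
The plan is to read off the linear part of the holonomy directly from the foliation charts $\{(V_j, (z_j, \eta_j))\}$ produced in Lemma \ref{lem:last1}, and to identify its transition cocycle with the unitary flat cocycle $\{t_{jk}\}$ of $N_{Y/X}$. First I would record the structure of these charts: since the leaves of $\mathcal{F}$ are the level sets $\{\eta_j=\text{const}\}$, on each non-empty overlap $V_{jk}$ the transverse coordinates are related by $\eta_k=g_{kj}(\eta_j)$ for a germ $g_{kj}\in{\rm Diff}(\mathbb{C}, 0)$ that depends on $\eta_j$ alone; this is exactly the defining property of a foliation chart. Because $Y=\{\eta_j=0\}=\{\eta_k=0\}$ is itself a leaf of $\mathcal{F}$, each $g_{kj}$ fixes the origin, so that $g_{kj}(0)=0$.

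The key computation is that $g_{kj}'(0)=t_{jk}^{-1}\in\U$, and I would obtain it by combining the two normalizations already available. On the one hand, Lemma \ref{lem:last1} gives $\eta_j=w_j+O(w_j^2)$ and $\eta_k=w_k+O(w_k^2)$. On the other hand, after Theorem \ref{thm:submain_1} we may take $\{(V_j, w_j)\}$ to be a system of type $3$, so that $t_{jk}w_k=w_j+O(w_j^4)$, and in particular $w_k=t_{jk}^{-1}w_j+O(w_j^2)$; in fact only the first-order relation $dw_j=t_{jk}\,dw_k$ on $U_{jk}$ is needed here. Substituting the second into the first yields $\eta_k=t_{jk}^{-1}w_j+O(w_j^2)=t_{jk}^{-1}\eta_j+O(\eta_j^2)$ along a transversal, which forces $g_{kj}'(0)=t_{jk}^{-1}$. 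Since $t_{jk}\in\U$ and $\U$ is a group under multiplication, $g_{kj}'(0)\in\U$ as well.

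Finally I would assemble the holonomy. Covering a loop representing $\gamma\in\pi_1(Y, *)$ by a chain of foliation charts $V_{j_0}, V_{j_1}, \dots, V_{j_N}$ with $V_{j_N}=V_{j_0}$, the holonomy ${\rm Hol}_{\mathcal{F}, Y}(\gamma)$ is the composition $g_{j_0 j_{N-1}}\circ\cdots\circ g_{j_2 j_1}\circ g_{j_1 j_0}$ of the transverse transition germs. As each $g_{j_{i+1}j_i}$ fixes $0$, the chain rule gives $({\rm Hol}_{\mathcal{F}, Y}(\gamma))'(0)=\prod_{i=0}^{N-1}g_{j_{i+1}j_i}'(0)=\prod_{i=0}^{N-1}t_{j_i j_{i+1}}^{-1}$, a finite product of elements of $\U$, hence an element of $\U$. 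The bulk of the content has already been pushed into Theorem \ref{thm:submain_1} and the construction of the $\eta_j$, so the only genuinely delicate point remaining is the bookkeeping that identifies ${\rm Hol}_{\mathcal{F}, Y}(\gamma)$ with this precise composition of transverse transition maps and that checks the relation $g_{kj}'(0)=t_{jk}^{-1}$ assembles, as a cocycle, into the product above; with the foliation-chart structure of Lemma \ref{lem:last1} in hand this is routine.
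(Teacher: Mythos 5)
Your proof is correct and is essentially the paper's own argument: the paper's proof of Proposition \ref{prop:last} consists of the single sentence that the assertion ``follows from Lemma \ref{lem:last1} and the definition of the holonomy,'' and what you have written is precisely the intended fleshing-out --- the normalization $\eta_j=w_j+O(w_j^2)$ together with $dw_j=t_{jk}\,dw_k$ forces the transverse transition germs to have derivative $t_{jk}^{-1}\in\U$ at $0$, and the chain rule around a loop puts $({\rm Hol}_{\mathcal{F}, Y}(\gamma))'(0)$ in $\U$. Your identification of this linear part with the cocycle $\{t_{jk}\}$ also matches the remark immediately after the proposition, where the paper notes that $\gamma\mapsto f_\gamma'(0)$ is the monodromy representation of the unitary flat bundle $N_{Y/X}$.
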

%

\begin{proof}
The assertion follows from Lemma \ref{lem:last1} and the definition of the holonomy. 
\end{proof}

As we are assuming that $N_{Y/X}^m$ is not holomorphically trivial for any positive integer $m$, there exists an element $\gamma\in \pi_1(Y, *)$ such that $f_\gamma\coloneqq {\rm Hol}_{\mathcal{F}, Y}(\gamma)$ satisfies $(f_\gamma'(0))^m\not=1$ for any $m\in\mathbb{Z}_{>0}$, 
since the map $\gamma\mapsto f_\gamma'(0)$ coincides with the monodromy representation $\rho_{N_{Y/X}}\colon \pi_1(Y, *)\to \U$ of the unitary flat line bundle $N_{Y/X}$.

Let us show that $f_\gamma$ is linearizable: i.e. there exists a constant $\lambda\in \U$ such that $f_\gamma(w)=\lambda w$ holds by using a suitable coordinate $w$ of a transversal of $\mathcal{F}$. 
Assume that $f_\gamma$ is not linearizable. 
Then it follows from  Proposition \ref{prop:last} and \cite[Theorem IV.2.3]{P} that, for any neighborhood $U$ of the origin in $\mathbb{C}$, there exists a point $w\in U\setminus\{0\}$ such that $f_\gamma^m(w)\in U\setminus \{0\}$ holds for any integer $m$ and $0\in \overline{\{f_\gamma^m(w)\mid m\in \mathbb{Z}\}}$. 
Thus one has that, for any real number $N$, there exists a leaf $L$ of $\mathcal{F}$ such that $L\cap Y=\emptyset$, $L\subset \{\psi>N\}\cup Y$, and that $\overline{L}\cap Y\not=\emptyset$ (see also \cite[\S 4]{KO}), which clearly contradicts to Lemma \ref{lem:hojyo} $(i)$. 

Therefore, 
one can take a $1$-dimensional disc $D\subset V_0$ which transversally intersects $Y$ at the base point $*$ and a coordinate $w$ of $D$ such that $f_\gamma(w)=\lambda w$ for some constant $\lambda\in \U$ (the base point corresponds to the origin: i.e. $w(*)=0$). 
Take a point $p_0$ in $D$ sufficiently close to the base point. 
Let $w_0$ be a complex number which corresponds to $p_0$ (i.e. $w_0\coloneqq w(p_0)$) 
and $L_0$ be the leaf of $\mathcal{F}$ which contains $p_0$. 
Set $\ve_0 \coloneqq  |w_0|$ and $N_0 \coloneqq \psi(p_0)$. 
Then, as $\overline{\{\lambda^mw_0\mid m\in\mathbb{Z}\}}=\{w\in \mathbb{C}\mid |w|=\ve_0\}$, one has that 
$L_0\cap D$ is a dense subset of $\{w\in \mathbb{C}\mid |w|=\ve_0\}$. 
Thus one has that
\begin{equation}\label{eq:foliation_S1}
\{\psi = N_0\}\cap D=\{p\in D \mid |w(p)|=\ve_0\}, 
\end{equation}
because it follows from by Lemma \ref{lem:hojyo} $(i)$ that $\overline{L_0}\subset \{\psi = N_0\}$, and $\{\psi=N_0\}\cap D$ is diffeomorphic to a circle if $N_0$ is enough large (see Remark \ref{rmk:omega_boundary}). 
Take an element $\gamma'\in \pi_1(Y, *)$. 
Then it follows from (\ref{eq:foliation_S1}) that 
$f_{\gamma'}(w_0)\in \{w\in \mathbb{C}\mid |w|=\ve_0\}$, since there exists a point $p\in L_0\cap D (\subset \{\psi = N_0\}\cap D)$ such that $w(p) = f_{\gamma'}(w_0)$ by definition of the holonomy. 
Thus one has that $f_{\gamma'}$ can be regarded as an element of the automorphism group of $\{w\in\mathbb{C}\mid |w|<\delta\}$ for a sufficiently small positive number $\delta$, which implies that there exists a constant $\lambda_{\gamma'}\in \U$ such that $f_{\gamma'}(w)=\lambda_{\gamma'}w$. 

Take an index $j$ such that $*\in U_j$. 
Define a holomorphic function $\zeta_j$ on $V_j$ so that $\zeta_j|_{V_j\cap D}=w$ and that $\zeta_j$ is leafwise constant. 
On $V_k$ with $V_{jk}\not=\emptyset$, define a leafwise constant holomorphic function $\zeta_k$ on $V_k$ so that $\zeta_k|_{V_{jk}}=\zeta_j|_{V_{jk}}$. 
Inductively, one can define a leafwise constant holomorphic function $\zeta_k$ on each $V_k$ in the same manner. 
As the degrees of freedom of this definition of each function $\zeta_k$ can be described by using the holonomy functions, which have been shown to be $\U$-linear, one has that there exist constants $s_{jk}\in \U$ such that $\zeta_j=s_{jk}\zeta_k$ on each $V_{jk}$. 
As these constants $s_{jk}$'s can be regarded as the transition functions of $[Y]|_{V}$ for a small neighborhood $V$ of $Y$, $[Y]|_{V}$ is unitary flat. 
\qed

%

\section{Proof of Theorem \ref{thm:main}}

\subsection{Proof of Theorem \ref{thm:main} $(i)$}
By Theorem \ref{thm:submain_1}, $(Y, X)$ is of infinite type. 
Thus Theorem \ref{thm:main} $(i)$ follows from \cite[Theorem 5.1]{N} (see also \cite[Theorem 1.3]{CLPT}). 
\qed

\begin{remark}\label{rmk:mainthm-1}
\cite[Theorem 5.1]{N} and \cite[Theorem 1.3]{CLPT} only require the condition that Ueda type of $(Y, X)$ is larger than $m$ for implying the existence of the fibration as in Theorem \ref{thm:main} $(i)$. 
Thus Theorem \ref{thm:main} $(i)$ holds not only when $[Y]$ is $C^\infty$-semipositive but also $[Y]$ is $C^r$-semipositive if $r\geq \max\{3, 2m\}$. 
\end{remark}

From Remark \ref{rmk:mainthm-1}, it is easily observed that, when $X$ is compact K\"ahler and $N_{Y/X}^m$ is holomorphically trivial, 
$[Y]$ is $C^\omega$-semipositive (i.e. $[Y]$ admits a real-analytic Hermitian metric with semipositive curvature) if and only if it is $C^r$-semipositive for some integer $r$ with $r\geq \max\{3, 2m\}$ 
(Consider the pull-back of the Fubini--Study type metric). 
In consideration of this fact, it seems to be natural to pose the following: 

\begin{question}
Does there exist an example $(X, Y)$ of a complex manifold $X$ and a hypersurface $Y\subset X$ with unitary flat normal bundle  $[Y]|_Y$ such that, for some non-negative integer $r$, 
$[Y]$ is $C^r$-semipositive and is not $C^{r+1}$-semipositive?
\end{question}

Note that there exist a projective surface $X$ and a non-singular curve $Y$ of $X$ with unitary flat normal bundle such that $[Y]$ is $C^\infty$-semipositive whereas it is not $C^\omega$-semipositive, see \cite[Theorem 1]{B} (see also \cite[Theorem 3]{U} for Diophantine normal bundles). 

\begin{remark}
One can also show Theorem \ref{thm:main} $(i)$ without using \cite[Theorem 5.1]{N} itself, 
but running an argument along the same strategy as the proof of \cite[Theorem 1.3]{CLPT}. 
Indeed, as it follows from \cite[Proposition 2.7]{CLPT} that 
the fibration as in the theorem exists if the natural map $H^1(X, \mathcal{O}_X)\to H^1(Y, \mathcal{O}_Y)$ is not injective. 
Therefore it is sufficient to consider the case where 
${\rm Pic}^0(X)\to {\rm Pic}^0(Y)$ is a finite covering map onto the image. 
Let $f\colon V\to \mathbb{C}$ be as in Corollary \ref{cor:main}. 
Take a point $p\in \mathbb{C}$ sufficiently close to the origin. 
Then, for the fiber $D\coloneqq f^{-1}(p)$, the line bundle $[mY-D]^{\otimes M}$ on $X$ is holomorphically trivial for some positive integer $M$, 
since $[mY-D]$ is topologically trivial and $[mY-D]|_Y$ is holomorphically trivial ($M$ is the degree of the finite covering ${\rm Pic}^0(X)\to {\rm Image}({\rm Pic}^0(X)\to {\rm Pic}^0(Y))$). 
Thus one has that $[MD]=[mMY]$. 
Let $f$ be a holomorphic global section of $[MD]$ whose zero divisor coincides with $MD$, 
and $g$ be a holomorphic global section of $[MD]$ whose zero divisor coincides with $mMY$. 
The theorem follows by considering the map $X\ni x\mapsto [f(x); g(x)]\in \mathbb{P}^1$. 
\end{remark}

\subsection{Proof of Theorem \ref{thm:main} $(ii)$}
Assume that the Hartogs type extension theorem does not hold on $X\setminus Y$. 
Then, as the assertion $(a)$ of 
Theorem \ref{thm:submain_2} holds, 
it follows from Theorem \ref{thm:submain_2} that there exists a neighborhood $V$ of $Y$ such that $[Y]|_{V}$ is unitary flat. 
As $N_{Y/X}^m$ is not holomorphically trivial for any positive integer $m$, 
one has that, for any neighborhood $\Omega$ of $Y$, there exists a compact Levi-flat hypersurface $H$ in $\Omega\setminus Y$ such that each leaf of the Levi foliation of $H$ is dense in $H$. 
Then it holds that any holomorphic function on $\Omega\setminus Y$ is constant (see the proof of \cite[Lemma 2.2]{KU} for example). 
Therefore it is clear that the Hartogs type extension theorem holds on $X\setminus Y$. 
\qed

\subsection{Proof of Theorem \ref{thm:main_last}}
By considering the flat metric, one can easily show that the assertion $(ii)$ implies $(i)$. 
Thus here we show that the assertion $(i)$ implies $(ii)$. 

Assume that $[Y]$ is $C^\omega$-semipositive on a neighborhood of $Y$. 
Then, by running the same argument as in the proof of Proposition \ref{prop:1}, 
one has a plurisubharmonic function $\psi$ on $X\setminus Y$ as in the proposition 
which is real analytic on a neighborhood of $Y$. 
Take a system $\{(V_j, w_j)\}$ of type $3$, whose existence is assured by Theorem \ref{thm:submain_1}. Set $\vp_j(z_j, w_j)\coloneqq \log |w_j|^2+\psi(z_j, w_j)$, where $z_j$ is as in \S \ref{section:ueda_theory}. 
By shrinking $V_j$ if necessary, we may assume that $\vp_j$ is a real-analytic plurisubharmonic function on $V_j$. 
Denote by $g_j$ the local Euclidean metric of $V_j$ with respect to $(z_j, w_j)$: 
i.e. the fundamental form of $g_j$ is $\sqrt{-1}dw_j\wedge d\overline{w_j} + \sqrt{-1}\sum_{\lambda=1}^{n-1}dz_j^\lambda\wedge d\overline{z_j^\lambda}$. 
Then the function $F_j$ on $V_j$ defined by 
\[
F_j\coloneqq \frac{|\ddbar \psi|_{g_j}}{|\del\psi\wedge \delbar\psi|_{g_j}}
=\frac{|w_j|^2\cdot |\ddbar \vp_j|_{g_j}}{|(dw_j-w_j\del\vp_j)\wedge (d\overline{w_j}-\overline{w_j}\delbar\vp_j)|_{g_j}}
\]
is clearly a function of class $C^\omega$ (by shrinking $V_j$ if necessary). 

Let us first show that 
\begin{equation}\label{eq:Leviflat_C_omega}
\ddbar \psi = F_j\cdot \del\psi\wedge \delbar\psi 
\end{equation}
holds on a neighborhood of $U_j$. 
For that purpose, consider the difference $\rho_j\coloneqq \ddbar \psi - F_j\cdot \del\psi\wedge \delbar\psi$ on $V_j$. 
Take a positive integer $m$. 
Then, from (Assertion)$_{m+2}$ in the proof of Theorem \ref{thm:submain_1}, it follows that, if one shrink $V_j$, there exists a holomorphic defining function $\widehat{w}_j$ of $U_j$ in $V_j$ such that 
\[
\vp_j(z_j, \widehat{w}_j) = \sum_{p+q\leq m+2}A_j^{p, q}\widehat{w}_j^p\overline{\widehat{w}_j}^q + O(|w_j|^{m+3})
\]
holds for some constants $A_j^{p, q}$'s as $|w_j|\to 0$. 
Thus one has that 
\[
\ddbar\vp_j = \left(\sum_{p+q\leq m}(p+1)(q+1)A_j^{p+1, q+1}\widehat{w}_j^p\overline{\widehat{w}_j}^q\right) d\widehat{w}_j\wedge d\overline{\widehat{w}_j} + O(|w_j|^{m+1}) 
\]
and 
\[
\del\vp_j= \left(\sum_{p+q\leq m}(p+1)A_j^{p, q}\widehat{w}_j^{+1}p\overline{w_j}^q\right)d\widehat{w}_j + O(|w_j|^{m+1})
\]
hold, from which it follows that $\rho_j = O(|w_j|^{m+1})$ as $|w_j|\to 0$. 
As $m$ is arbitrary and $\rho_j$ is real analytic, one have that $\rho_j\equiv 0$ on a neighborhood of $Y$. 

From the equation (\ref{eq:Leviflat_C_omega}), 
it directly follows that, for a sufficiently large $N$ and a point $p\in \{\psi=N\}$, 
$(\sqrt{-1}\ddbar\psi)_p (v\wedge\overline{w})\equiv 0$ for $v, w\in T_{X, p}$ if $v$ and $w$ are orthogonal to $(\del\psi)_p$. 
Thus $\{\psi=N\}$ is Levi-flat for a sufficiently large $N$. 
Therefore, the theorem follows from the same argument as in the proof of Theorem \ref{thm:submain_2}. 
\qed



\end{document}